\documentclass{article}
\usepackage[utf8]{inputenc}
\usepackage{amssymb, amsmath}
\usepackage{amsthm}
\usepackage{amsfonts} 
\usepackage{tikz-cd}

\usetikzlibrary{arrows}

\title{Counting Representations of Quivers Respecting Nilpotent Relations over Finite Fields}
\author{Bangming Deng and Jiuzhao Hua} 

\newtheorem{thm}{Theorem}[section]
\newtheorem{lem}{Lemma}[section]
\newtheorem{cor}{Corollary}[section]
\newtheorem{dfn}{Definition}[section]
\newtheorem{cjc}{Conjecture}[section]

\usepackage[parfill]{parskip}

\begin{document}
\date{\vspace{-0.5cm}}\date{} 
\maketitle\begin{abstract}
This paper presents analogous results of Hua \cite{JH 2000}\cite{JH 2021} on numbers of representations of quivers over finite fields which respect nilpotent relations under certain assumptions. A closed formula which counts isomorphism classes of absolutely indecomposable representations with given dimension vectors is given and a $q$-deformation of Weyl-Kac denominator identity is established. In principle, if the numbers of representations are known, then the numbers of isomorphism classes of absolutely indecomposable representations are known.
\end{abstract}

\section{Introduction}

Let $\mathbb{N}$, $\mathbb{Z}$ and $\mathbb{Q}$ be the sets of non-negative integers, integers and rational numbers respectively. Given a positive integer $n$ and
an $n\times n$ matrix $C=[a_{ij}]$ with $a_{ij}\in\mathbb{N}$, let $\Gamma$ be the \textit{quiver} defined by $C$, i.e., $\Gamma$ is the directed graph with $n$ vertices $\{1, 2, \cdots, n\}$ equipped with $a_{ij}$ arrows from $i$ to $j$ ($1\le i,j \le n$). We attach an indeterminate $X_i$ to vertex $i$ ($1\le i\le n$) and $X_{ij}^{(k)}$ to the $k$-th arrow from $i$ to $j$ ($1\le i,j \le n$ and $1\le k \le a_{ij}$). Let $\Omega$ be the set of all arrows in $\Gamma$, thus $\Omega$ can be identified with the set 
$\{ X_{ij}^{(k)} | \,1\le i,j\le n$, $1\le k \le a_{ij} \text{ and } a_{ij} > 0\}$. The following is an example of a quiver and its defining matrix:
\[
\begin{tikzcd}[row sep=large, column sep = large]
\arrow[loop left, distance=3em, "{X_{11}^{(1)}}"]
\underset{1}{\circ}\arrow[r, bend left, "{X_{12}^{(1)}}"] & 
\arrow[l, bend left, "{X_{21}^{(1)}}" ] \underset{2}{\circ} \arrow[r, bend left, "{X_{23}^{(1)}}"] \arrow[r, bend right, swap, "{X_{23}^{(2)}}"] 
& \underset{3}{\circ}
\end{tikzcd},
\quad
\left[
\begin{array}{ccc}
1 & 1 & 0 \\ 
1 & 0 & 2 \\ 
0 & 0 & 0 
\end{array}
\right].
\]

Let $\mathbb{F}$ be a field. A monomial in the following form is called a \textit{simple relation} for $\Gamma$:
$$X_{i_0i_1}^{(k_1)}X_{i_1i_2}^{(k_2)}X_{i_2i_3}^{(k_3)}\cdots X_{i_{s-1} i_{s}}^{(k_s)},$$
where $i_0$ is called the \textit{starting point} of the relation and $i_{s}$ is called the \textit{ending point}. It is necessary that $a_{i_0i_1}a_{i_1i_2}a_{i_2i_3}\cdots a_{i_{s-1} i_{s}} \ne 0$ and $1\le k_m \le a_{i_{m-1}i_m}$ ($1\le m \le s$). It is evident that every simple relation is induced by a path in $\Gamma$. If $i_0 = i_s$ then the relation is called \textit{cyclic}. So simple cyclic relations are induced by oriented cycles in $\Gamma$. A \textit{relation} for $\Gamma$ is a linear combination over $\mathbb{F}$ of simple relations that share the same starting point and the same ending point. A relation is called \textit{cyclic} if all of its summands are cyclic. For example, the following is a cyclic relation for the quiver mentioned above:
$$X_{11}^{(1)}X_{12}^{(1)}X_{21}^{(1)} - X_{12}^{(1)}X_{21}^{(1)}X_{11}^{(1)}.$$
In this paper, we are only interested in cyclic relations. 

Given $\alpha = (\alpha _1, \cdots, \alpha_n)\in\mathbb{N}^n$, a \textit{representation of quiver} $\Gamma$ of dimension $\alpha$ over a field $\mathbb{F}$ is a function 
$\sigma: \Omega \mapsto \{ \textit{Matrices over } \mathbb{F} \}$ such that $\sigma(X_{ij}^{(k)})$ has order $\alpha_i \times\alpha_j$ for $1\le i , j\le n$ and 
$1\le k \le a_{ij}$. Note that matrices with $0$ rows or $0$ columns are permitted. $\alpha$ is called the \textit{dimension vector} of $\sigma$ and denoted by $\dim \sigma$. The following diagram defines a representation of the quiver mentioned above with 
dimension vector $(1,2,2$):
\[
\begin{tikzcd}[row sep=large, column sep = large, ampersand replacement=\&]
\arrow[loop left, distance=3em, "{\begin{bmatrix}-1 \end{bmatrix}}"]
\underset{1}{\circ}\arrow[r, bend left, "{\begin{bmatrix}1 & \!\! -1\end{bmatrix}}"] \& 
\arrow[l, bend left, "{\begin{bmatrix}1 \\ 1\end{bmatrix}}" ] \underset{2}{\circ} \arrow[r, bend left, "{\begin{bmatrix}1 & 1 \\0 & 1\end{bmatrix}}"] 
\arrow[r, bend right, swap, "{\begin{bmatrix}1 & 0 \\0 & 1\end{bmatrix}}"] 
\& \underset{3}{\circ}
\end{tikzcd}.
\]
Given two representations $\sigma$ and $\tau$ with dimension vectors 
$\alpha$ and $\beta$ respectively, an $n$-tuple $(H_1, \cdots, H_n)$ of
matrices over $\mathbb{F}$ is called a \textit{homomorphism} from $\sigma$ to $\tau$ if $H_i$ has order $\alpha_i \times \beta_i$ for $1\le i \le n$ and 
$\sigma(X_{ij}^{(k)}) H_j=H_i\tau(X_{ij}^{(k)})$ for all $1\le i , j\le n$ and $1\le k \le a_{ij}$. If $\dim \sigma = \dim \tau$ and $H_1, \cdots, H_n$ are all nonsingular, then $(H_1, \cdots, H_n)$ is called an \textit{isomorphism}. The \textit{direct sum} of $\sigma$ and $\tau$, denoted by $\sigma\oplus\tau$, is defined by 
$(\sigma\oplus\tau)(X_{ij}^{(k)}) = \sigma(X_{ij}^{(k)})\oplus\tau(X_{ij}^{(k)})$ ($1\le i , j\le n$ and $1\le k \le a_{ij}$). 

A representation is called 
\textit{decomposable} if it is isomorphic to a direct sum of two representations with non-zero dimension vectors. An indecomposable representation of $\Gamma$ over $\mathbb{F}$ 
is called \textit{absolutely indecomposable} if it is still indecomposable when it is considered as a representation of $\Gamma$ over $\overline{\mathbb{F}}$, the algebraic closure of $\mathbb{F}$.

Let $\text{Mat}(m\times n, \mathbb{F})$ be the set of all $m\times n$ matrices over $\mathbb{F}$ for $m, n\in\mathbb{N}$. Given $\alpha\in\mathbb{N}^n$,
let $\text{Rep}(\alpha, \mathbb{F})$ be the set of all representations of $\Gamma$ of dimension $\alpha$ over $\mathbb{F}$. Thus $\text{Rep}(\alpha, \mathbb{F})$ is naturally identified with the following affine variety:
$$\bigoplus_{X_{ij}^{(k)} \in \Omega}\text{Mat}(\alpha_i\!\times\!\alpha_j, \mathbb{F}).$$
Let $\text{GL}(m,\mathbb{F})$ be the General Linear Group of order $m$ over $\mathbb{F}$. Given $\alpha\in\mathbb{N}^n$,
let $\text{GL}(\alpha,\mathbb{F}) = \prod_{i=1}^n\!\text{GL}(\alpha_i,\mathbb{F})$,
then the linear algebraic group $\text{GL}(\alpha,\mathbb{F})$ acts on $\text{Rep}(\alpha, \mathbb{F})$ as follows:
\begin{align*}
\text{GL}(\alpha,\mathbb{F}) \times \text{Rep}(\alpha, \mathbb{F}) & \to \text{Rep}(\alpha, \mathbb{F}) \\
(g, \sigma) &\mapsto g\sigma,
\end{align*}
where $g\sigma(X_{ij}^{(k)}) = g_i^{-1}\sigma(X_{ij}^{(k)})g_j$ for $g = (g_1,\cdots, g_n) \in \text{GL}(\alpha,\mathbb{F})$. It is obvious that two representations from
$\text{Rep}(\alpha,\mathbb{F})$ are isomorphic if and only if they are in the same orbit. 

Given a representation $\sigma$ of $\Gamma$ and a simple relation $R$ for $\Gamma$, where 
$$R=X_{i_0i_1}^{(k_1)}X_{i_1i_2}^{(k_2)}X_{i_2i_3}^{(k_3)}\cdots X_{i_{s-1} i_{s}}^{(k_s)},$$
$\sigma$ acts on $R$ naturally, i.e.,
$$\sigma(R) =\sigma\!\left(X_{i_0i_1}^{(k_1)}\right)\sigma\!\left(X_{i_1i_2}^{(k_2)}\right)\sigma\!\left(X_{i_2i_3}^{(k_3)}\right)\cdots \sigma\!\left(X_{i_{s-1}i_{s}}^{(k_s)}\right).$$
This action is naturally extended to any relation for $\Gamma$.
$\sigma$ is said to be \textit{respecting} $R$ if $\sigma(R)=0$; $\sigma$ is said to be \textit{respecting} $R$ \textit{nilpotently} if $\sigma(R)$ is a nilpotent matrix, i.e., 
$\sigma(R)^m=0$ for some $m\in\mathbb{N}$.

From now on, let $\mathbb{F} = \mathbb{F}_q$, the finite field with $q$ elements where $q$ is a prime power, $\mathcal{R}$ a set of cyclic relations for $\Gamma$. 
For $\alpha\in\mathbb{N}^n$, let $\text{Rep}(\alpha, \mathbb{F}_q)_\mathcal{R}$ be the set 
all representations of $\Gamma$ of dimension $\alpha$ 
over $\mathbb{F}_q$ that respect all relations in $\mathcal{R}$ nilpotently, i.e.,
$$
\text{Rep}(\alpha, \mathbb{F}_q)_\mathcal{R} = \{\sigma\in \text{Rep}(\alpha, \mathbb{F}_q) : \sigma(R) \textit{ is nilpotent for all } R\in\mathcal{R}\}.
$$
It is evident that $\text{Rep}(\alpha, \mathbb{F}_q)_\mathcal{R}$ is closed under the action of $\text{GL}(\alpha,\mathbb{F}_q)$.
In what follows, we assume that $|\text{Rep}(\alpha, \mathbb{F}_q)_\mathcal{R}|$ is a polynomial in $q$ with rational coefficients for any $\alpha\in\mathbb{N}^n$, i.e.,
there exists $r(\alpha,q) \in\mathbb{Q}[q]$ such that $|\text{Rep}(\alpha, \mathbb{F}_{q^d})_\mathcal{R}| = r(\alpha,q^d)$ for $d\ge 1$. 
Let $M(\alpha,q)$ ($I(\alpha, q)$, $A(\alpha,q)$) be the number of isomorphism classes of representations (indecomposable representations, absolutely indecomposable representations respectively) of $\Gamma$ of dimension $\alpha$ over $\mathbb{F}_q$ which respect all relations in $\mathcal{R}$. 

Counting formulae for $M(\alpha,q)$, $I(\alpha, q)$ and $A(\alpha,q)$ exist in Hua \cite{JH 2000} for quivers without relations and in Hua \cite{JH 2021} for the quiver with $1$ vertex and $g$ loops with nilpotent relations. It is widely known
that $A(\alpha,q)$'s are of significant importance because of their deep connections with Geometric Invariant Theory, Quantum Group Theory and Representation Theory of Kac-Moody Algebras (Kac \cite{VK 1983}, Ringel \cite{CR 1990} and Hausel \cite{TH 2010}). 

In case $\Gamma$ has no edge-loops, a theorem of Kac \cite{VK 1983} shows that the dimension vectors of absolutely indecomposable
representations of $\Gamma$ over $\mathbb{F}_q$ are precisely the positive roots of the root system of the Kac-Moody algebra
associated with $\Gamma$. Kac \cite{VK 1983} also conjectured that the constant term of the polynomial counting isomorphism classes of absolutely indecomposables with a given dimension vector is the same 
as the root multiplicity of the given dimension vector. This conjecture was proved by Crawley-Boevey and Van den Bergh \cite{C-V 2004} for indivisible dimension vectors and 
by Hausel \cite{TH 2010} in general. 
This paper proves analogous results of Hua \cite{JH 2000}\cite{JH 2021} on quivers with nilpotent relations under the assumption above.

\section{Numbers of Stabilizers}
This paper uses the same methodology as Hua \cite{JH 2000}. A key step is to determine the number of representations that are stabilized by conjugacy classes of $\text{GL}(\alpha, \mathbb{F}_q)$, which are parametrized by $n$-tuples of partitions and monic irreducible polynomials over $\mathbb{F}_q$.

A \textit{partition} $\lambda = (\lambda_1,\lambda_2,\cdots,\lambda_s)$ is a finite sequence of positive integers such that $\lambda_{i}\ge\lambda_{i+1}$ for $1\le i \le s-1$. 
The unique partition of $0$ is $(0)$. $|\lambda| :=\sum_{i\ge 1}\lambda_i$ is called the \textit{weight} of $\lambda$.
Let $\mathcal{P}$ be the set of partitions of all non-negative integers. 
Let $f(x) = a_0 + a_1x + a_2x^2+ \dots + a_{n-1}x^{n-1} + x^n\in \mathbb{F}_q[x]$ be a polynomial over $\mathbb{F}_q$ and $c(f)$ be its \textit{companion matrix}, i.e.,
$$
\newcommand*{\temp}{\multicolumn{1}{|}{}}
c(f) = 
\left[
\begin{array}{ccccc}
0 & 1 & 0 & \dots & 0 \\ 
0 & 0 & 1 & \dots & 0 \\ 
\vdots & \vdots & \vdots & \ddots & \vdots \\
0 & 0 & 0 & \dots & 1 \\
-a_0 & -a_1 & -a_2 & \dots & -a_{n-1} 
\end{array}
\right].
$$
For any $m\in\mathbb{N}\backslash\{0\}$, let $J_m(f)$ be the \textit{Jordan block matrix} of order $m$ with $c(f)$ on the main diagonal, i.e.,
$$
\newcommand*{\temp}{\multicolumn{1}{|}{}}
J_m(f) = 
\left[
\begin{array}{ccccc}
c(f) & I & 0 & \dots & 0 \\ 
0 & c(f) & I & \dots & 0 \\ 
\vdots & \vdots & \vdots & \ddots & \vdots \\
0 & 0 & 0 & \dots & I \\
0 & 0 & 0 & \dots & c(f) 
\end{array}
\right]_{m\times m}, 
$$
where $I$ is the identity matrix of order $\deg(f)$.
For $\lambda=(\lambda_1, \lambda_2, \dots, \lambda_s)\in\mathcal{P}$, let $J_{\lambda}(f)$ be the 
\textit{direct sum} of $J_{\lambda_i}(f)$ $(i=1,\dots,s)$, i.e.,
$$J_{\lambda}(f) = J_{\lambda_1}(f) \oplus J_{\lambda_2}(f) \oplus \dots \oplus J_{\lambda_s}(f),$$
which stands for
$$
\newcommand*{\temp}{\multicolumn{1}{|}{}}
\left[
\begin{array}{cccc}
J_{\lambda_1}(f) & 0 & \dots & 0 \\ 
0 & J_{\lambda_2}(f) & \dots & 0 \\ 
\vdots & \vdots & \ddots & \vdots \\
0 & 0 & \dots & J_{\lambda_s}(f)
\end{array}
\right].
$$

\begin{dfn}
For any matrix of order $m\times n$, the \textit{arm length} of index $(i,j)$ is one plus the number of minimal moves from $(i,j)$ to $(1,n)$, where diagonal moves are not permitted. Thus the arm length distribution is as follows:
$$
\left[
\begin{array}{llllll}
n & n-1 & \dots & 3 & 2 & 1\\ 
n+1 & n & \dots & 4 & 3 & 2\\ 
n+2 & n+1 & \dots & 5 & 4 & 3\\ 
\vdots & \vdots & \vdots & \vdots & \vdots & \vdots\\ 
m+n & m+n-1 & \dots & m+2 & m+1 & m
\end{array}
\right]_{m\times n}.
$$
The \textit{arm rank} of a matrix $M = [a_{ij}]$ of order $m\times n$, denoted by $ar(M)$, is the largest arm length of indexes of non-zero elements of $M$, i.e.,
$$ar(M) = \max\left\{\textit{arm length of }(i,j) \,|\, a_{ij} \ne 0 \textit{ where } 1\le i\le m, 1\le j\le n\right\}.$$
\end{dfn}

\begin{dfn}
A matrix $M = [a_{ij}]$ of order $m\times n$ is of type-U if it satisfies the following conditions:
\begin{itemize}
\item $a_{ij} = a_{st}$ if $(i,j)$ and $(s,t)$ have the same arm length,
\item the arm rank of $M$ is at most $\min\{m,n\}$.
\end{itemize}
\end{dfn}
Thus a type-U matrix of order $m\times n$ has either the following form when $m\ge n$:
$$
\newcommand*{\temp}{\multicolumn{1}{|}{}}
\left[
\begin{array}{lllll}
a_1 & a_2 & \dots & a_{n-1} & a_n \\ 
0 & a_1 & \dots & a_{n-2} & a_{n-1} \\ 
\vdots & \vdots & \ddots & \vdots & \vdots \\
0 & 0 & \dots & a_1 & a_2 \\
0 & 0 & \dots & 0 & a_1 \\
\cline{1-5}
0 & 0 & \dots & 0 & 0 \\
\vdots & \vdots & \vdots & \vdots & \vdots \\
0 & 0 & \dots & 0 & 0
\end{array}
\right]_{m \times n},
$$
or the following form when $m\le n$:
$$
\newcommand*{\temp}{\multicolumn{1}{|}{}}
\left[
\begin{array}{lllllllll}
0 & \dots & 0 & \temp & a_1 & a_2 & \dots & a_{m-1} & a_m \\ 
0 & \dots & 0 & \temp & 0 & a_1 & \dots & a_{m-2} & a_{m-1} \\ 
\vdots & \vdots & \vdots & \temp & \vdots & \vdots & \ddots & \vdots & \vdots \\
0 & \dots & 0 & \temp & 0 & 0 & \dots & a_1 & a_2 \\
0 & \dots & 0 & \temp & 0 & 0 & \dots & 0 & a_1 
\end{array}
\right]_{m \times n}. 
$$

\begin{thm}[Turnbull \& Aitken  \cite{T-A 1948}]\label{T-A Thm}
Let $\lambda=(\lambda_1, \lambda_2, \dots, \lambda_s)$ and $\mu=(\mu_1, \mu_2, \dots, \mu_t)$ be two partitions and $f(x)=x-a_0$ with $a_0\in\mathbb{F}_q$, then any matrix $U$ over $\mathbb{F}_q$ that satisfies 
$J_\lambda(f)U= UJ_\mu(f)$ can be written as an $s\times t$ block matrix in the following form:
$$
\newcommand*{\temp}{\multicolumn{1}{|}{}}
\left[
\begin{array}{cccc}
U_{11} & U_{12} & \dots & U_{1t} \\ 
U_{21} & U_{22} & \dots & U_{2t} \\ 
\vdots & \vdots & \ddots & \vdots \\
U_{s1} & U_{s2} & \dots & U_{st} 
\end{array}
\right],
$$
where each submatrix $U_{ij}$ is a type-U matrix over $\mathbb{F}_q$ of order $\lambda_i\times \mu_j$ for all $(i,j)$ where $1\le i \le s$ and $1\le j \le t$.
\end{thm}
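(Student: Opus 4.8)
The plan is to reduce the statement to a single nilpotent intertwining equation and then solve that equation entrywise. Since $\deg f = 1$, the companion matrix $c(f)=[a_0]$ is a scalar, so $J_m(f)$ is the ordinary $m\times m$ Jordan block with eigenvalue $a_0$; I would write $J_m(f) = a_0 I_m + N_m$, where $N_m$ carries $1$'s on the superdiagonal and $0$'s elsewhere. Then $J_\lambda(f) = a_0 I + N$ and $J_\mu(f) = a_0 I + N'$ with $N,N'$ block-diagonal nilpotent matrices, and the hypothesis $J_\lambda(f)U = UJ_\mu(f)$ collapses, after cancelling the scalar part $a_0 U$ from both sides, to $NU = UN'$. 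First I would partition $U$ into blocks $U_{ij}$ of order $\lambda_i\times\mu_j$ conforming to the Jordan decomposition. Because $N$ and $N'$ are block-diagonal with diagonal blocks $N_{\lambda_i}$ and $N_{\mu_j}$, the single equation $NU=UN'$ decouples into the independent block equations $N_{\lambda_i}U_{ij}=U_{ij}N_{\mu_j}$ for every pair $(i,j)$. It therefore suffices to prove that any $p\times r$ matrix $V=[v_{kl}]$ satisfying $N_pV=VN_r$ is of type-U.

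Next I would solve this single-block equation entrywise. Comparing the $(k,l)$ entries of $N_pV$ and $VN_r$, with the convention that $v_{kl}=0$ whenever an index leaves its range, yields the recurrence $v_{k+1,l}=v_{k,l-1}$ for all $k,l$. For interior indices this reads $v_{k,l}=v_{k-1,l-1}$, so $V$ is constant along each diagonal $\{(k,l):l-k=\text{const}\}$, i.e. $V$ is Toeplitz-type. The boundary instances force vanishing: setting $l=1$ gives $v_{k+1,1}=0$, so every diagonal strictly below the main diagonal is zero; setting $k=p$ gives $v_{p,l-1}=0$ for $l>1$, and when $r>p$ this additionally annihilates the lowest $r-p$ of the remaining (non-negative) diagonals. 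This pins down exactly which diagonals may carry nonzero, necessarily constant, values: in both cases precisely $\min\{p,r\}$ of them.

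Finally I would match these conclusions with the definition of a type-U matrix. The arm length of $(i,j)$ equals that of $(s,t)$ precisely when $i-j=s-t$, that is, when they lie on the same diagonal, so constancy along diagonals is exactly the first type-U condition. The vanishing of the lower diagonals established above is exactly the statement $ar(V)\le\min\{p,r\}$, the second type-U condition; unwinding it in the two shape cases $p\ge r$ and $p\le r$ reproduces the two displayed normal forms, and reassembling the blocks $U_{ij}$ gives the claimed form of $U$. I expect the only delicate part to be the careful bookkeeping of the boundary instances of the recurrence — in particular checking that the admissible band of diagonals has width exactly $\min\{p,r\}$ in both shape cases, so that the arm-rank bound is correct; once the equation has been decoupled block by block, everything else is routine linear algebra.
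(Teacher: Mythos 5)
Your argument is correct and complete, but it is worth noting that the paper itself offers no proof of this statement at all: it is quoted as a classical result of Turnbull and Aitken, with only a citation to their book. So what you have written is a self-contained elementary verification of the cited fact rather than an alternative to an argument in the paper. Your reduction is the standard one and it checks out: since $\deg f=1$ the scalar part $a_0I$ cancels, the intertwining condition decouples over the Jordan blocks into $N_pV=VN_r$, and the entrywise recurrence $v_{k+1,l}=v_{k,l-1}$ forces constancy along the diagonals $l-k=\mathrm{const}$ while the two boundary cases ($l=1$ and $k=p$) kill the strictly lower diagonals and, when $r>p$, the lowest $r-p$ of the nonnegative ones, leaving exactly $\min\{p,r\}$ free diagonals. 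Your translation into the paper's language is also accurate: the arm length of $(i,j)$ in a $p\times r$ block is $r+i-j$, so equal arm length is the same as lying on one diagonal, and the bound $ar(V)\le\min\{p,r\}$ is precisely the vanishing pattern you derived, reproducing both displayed normal forms. The one point I would tighten in a final write-up is the phrase about the boundary case $l=1$ "giving $v_{k+1,1}=0$": strictly it gives $v_{k+1,1}=v_{k,0}=0$ only under the out-of-range convention, and you then need constancy along each sub-diagonal to conclude that the entire diagonal vanishes from its single column-one entry; you clearly have this in mind, but it deserves an explicit sentence. With that, your proof is a legitimate and arguably useful addition, since the reader of this paper otherwise has to consult Turnbull and Aitken.
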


As an example, let $\lambda = (3, 2, 2), \mu=(3,3,2)$ and $f(x) = x-t\in\mathbb{F}_q[x]$, then
$$\newcommand*{\temp}{\multicolumn{1}{|}{}}
J_\lambda(f)=\left[
\begin{array}{ccccccccc}
t & 1 & 0 & \temp & 0 & 0 & \temp & 0 & 0 \\ 
0 & t & 1 & \temp & 0 & 0 & \temp & 0 & 0 \\
0 & 0 & t & \temp & 0 & 0 & \temp & 0 & 0 \\
\cline{1-9}
0 & 0 & 0 & \temp & t & 1 & \temp & 0 & 0 \\ 
0 & 0 & 0 & \temp & 0 & t & \temp & 0 & 0 \\
\cline{1-9}
0 & 0 & 0 & \temp & 0 & 0 & \temp & t & 1 \\ 
0 & 0 & 0 & \temp & 0 & 0 & \temp & 0 & t 
\end{array}
\right]
, 
J_\mu(f)=\left[
\begin{array}{cccccccccc}
t & 1 & 0 & \temp & 0 & 0 & 0 & \temp & 0 & 0 \\ 
0 & t & 1 & \temp & 0 & 0 & 0 & \temp & 0 & 0 \\
0 & 0 & t & \temp & 0 & 0 & 0 & \temp & 0 & 0 \\
\cline{1-10}
0 & 0 & 0 & \temp & t & 1 & 0 & \temp & 0 & 0 \\ 
0 & 0 & 0 & \temp & 0 & t & 1 & \temp & 0 & 0 \\
0 & 0 & 0 & \temp & 0 & 0 & t & \temp & 0 & 0 \\
\cline{1-10}
0 & 0 & 0 & \temp & 0 & 0 & 0 & \temp & t & 1 \\ 
0 & 0 & 0 & \temp & 0 & 0 & 0 & \temp & 0 & t 
\end{array}
\right].
$$
Every matrix $U$ which satisfies $J_\lambda(f)\,U = UJ_\mu(f)$ can be written as a block matrix in the following form:
$$
\newcommand*{\temp}{\multicolumn{1}{|}{}}
U=\left[
\begin{array}{cccccccccc}
a & b & c & \temp & h & i & j & \temp & m & n \\ 
0 & a & b & \temp & 0 & h & i & \temp & 0 & m \\
0 & 0 & a & \temp & 0 & 0 & h & \temp & 0 & 0 \\
\cline{1-10}
0 & p & q & \temp & 0 & d & e & \temp & k & l \\ 
0 & 0 & p & \temp & 0 & 0 & d & \temp & 0 & k \\
\cline{1-10}
0 & u & v & \temp & 0 & r & s & \temp & f & g \\ 
0 & 0 & u & \temp & 0 & 0 & r & \temp & 0 & f 
\end{array}
\right].
$$

For a partition $\lambda\in\mathcal{P}$, let $m_\lambda^{\underline{i}}$ be the \textit{multiplicity} of $i$, i.e., $m_\lambda^{\underline{i}}$ is the number of parts equal to $i$ in $\lambda$, and $\lambda$ can be written in its ``\textit{exponential form}" $(1^{m_1}2^{m_2}3^{m_3}\cdots)$, where $m_i = m_\lambda^{\underline{i}}$.
Let $\lambda'=(\lambda_1', \lambda_2', \lambda_3', \dots)$ be the \textit{conjugate partition} of $\lambda$, which means that $\lambda_i'$ is the number of parts in $\lambda$ that are greater than or equal to $i$ for all $i\ge 1$. 
Let $\lambda$ and $\mu$ be two partitions, $\lambda'=(\lambda_1', \lambda_2', \lambda_3', \dots)$ and $\mu'=(\mu_1', \mu_2', \mu_3', \dots)$ be their conjugate partitions, we define two types of ``\textit{inner product}" of $\lambda$ and $\mu$ as follows:
\begin{equation}\label{inner prodt dfn}
\langle\lambda, \mu\rangle = \sum_{i\ge 1}\lambda_i'\mu_i' \textit{ and } (|\lambda,\mu|) =\langle\lambda,\mu\rangle - \sum_{s\ge 1}m_\lambda^{\underline{s}}m_\mu^{\underline{s}}.
\end{equation}
$\langle\lambda, \mu\rangle$ can also be expressed in the following forms:
\begin{equation}\label{inner prodt dfn alt}
\langle\lambda, \mu\rangle = \sum_{i,j\ge 1}\min(i,j)m_\lambda^{\underline{i}}m_\mu^{\underline{j}} = \sum_{i,j\ge 1}\min(\lambda_i,\mu_j).
\end{equation}
For an $n$-tuple of partitions $\pi = (\pi_1, \cdots, \pi_n)\in\mathcal{P}^n$ and $s\in\mathbb{N}\backslash\{0\}$, we define the \textit{multiplicity vector} of order $s$ by $d_\pi^{\,\underline{s}} := (m_{\pi_1}^{\underline{s}}, \cdots, m_{\pi_n}^{\underline{s}})$. Thus, $(|\pi_1|, \cdots, |\pi_n|) = \sum_{s\ge 1}s d_\pi^{\,\underline{s}}$.

\begin{cor}\label{cor 1}
Let $\lambda, \mu\in\mathcal{P}$ be two partitions and $f(x)=x-a_0$ with $a_0\in\mathbb{F}_q$, then the number of matrices $U$ over $\mathbb{F}_q$ that satisfy  $J_\lambda(f)U = UJ_\mu(f)$ is equal to $q^{\langle\lambda,\mu\rangle}$.
\end{cor}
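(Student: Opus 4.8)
The plan is to read the count off directly from the structure theorem of Turnbull and Aitken (Theorem~\ref{T-A Thm}), since $f(x)=x-a_0$ is exactly the case covered there. First I would invoke that theorem to write every solution $U$ of $J_\lambda(f)U = UJ_\mu(f)$ as an $s\times t$ block matrix whose $(i,j)$ block $U_{ij}$ is an arbitrary type-U matrix of order $\lambda_i\times\mu_j$. The crucial point is that the blocks are mutually unconstrained: the theorem asserts that the solution set is \emph{precisely} the set of such block matrices, so choosing each $U_{ij}$ independently enumerates every solution exactly once, and the total count factorizes over the blocks.

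Next I would count the degrees of freedom of a single type-U block. By the explicit normal forms displayed just before the theorem, a type-U matrix of order $m\times n$ is completely determined by the scalars $a_1,\dots,a_{\min(m,n)}$ attached to arm lengths $1,\dots,\min(m,n)$, with all remaining entries forced to be $0$; hence it is parametrized freely by $\min(m,n)$ elements of $\mathbb{F}_q$. Consequently the block $U_{ij}$ of order $\lambda_i\times\mu_j$ contributes exactly $q^{\min(\lambda_i,\mu_j)}$ choices.

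Since the blocks are independent, multiplying these contributions gives
\begin{equation*}
\#\{U : J_\lambda(f)U = UJ_\mu(f)\} = \prod_{i=1}^{s}\prod_{j=1}^{t} q^{\min(\lambda_i,\mu_j)} = q^{\sum_{i,j}\min(\lambda_i,\mu_j)}.
\end{equation*}
Finally I would identify the exponent with $\langle\lambda,\mu\rangle$ by the second expression in \eqref{inner prodt dfn alt}, namely $\langle\lambda,\mu\rangle = \sum_{i,j\ge 1}\min(\lambda_i,\mu_j)$ (the sum ranging over the nonzero parts $\lambda_i,\mu_j$), which yields exactly $q^{\langle\lambda,\mu\rangle}$, as claimed.

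I expect the only point requiring care — rather than a genuine obstacle — to lie in the first two steps: confirming that Theorem~\ref{T-A Thm} imposes no cross-relations among distinct blocks, so that the count genuinely factorizes, and that a type-U matrix of order $m\times n$ has exactly $\min(m,n)$ free parameters regardless of whether $m\ge n$ or $m\le n$. Both facts follow immediately from the displayed normal forms, and once they are in hand the combinatorial identity \eqref{inner prodt dfn alt} completes the argument with no further computation.
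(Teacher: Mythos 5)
Your proof is correct and is essentially the argument the paper intends: its own proof simply cites Theorem~\ref{T-A Thm} together with identity~(\ref{inner prodt dfn alt}), and your write-up supplies exactly the details behind that citation (independence of the blocks, $\min(\lambda_i,\mu_j)$ free parameters per type-U block, and the identity $\langle\lambda,\mu\rangle=\sum_{i,j}\min(\lambda_i,\mu_j)$).
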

\begin{proof}
This is a direct consequence of Theorem \ref{T-A Thm} and identity (\ref{inner prodt dfn alt}). An alternative proof is given in Hua \cite{JH 2000}.
\end{proof}

\begin{cor}\label{full stabilizer}
Given $\pi=(\pi_1,\cdots,\pi_n)\in\mathcal{P}^n$ and $f(x)=x-a_0$, let $\alpha=(|\pi_1|,\cdots,|\pi_n|)\in\mathbb{N}^n$ and $g=(J_{\pi_1}\!(f),\cdots,J_{\pi_n}\!(f))\in\textup{GL}(\alpha,\mathbb{F}_q)$. The stabilizer of $g$ in $\textup{Rep}(\alpha, \mathbb{F}_q)$ is defined as:
$$X_g=\left\{\sigma\in\textup{Rep}(\alpha,\mathbb{F}_q) : g\sigma = \sigma\right\}.$$
There holds:
$$ |X_g| = q^{\sum_{1\le i,j\le n} a_{ij}\langle\pi_i,\pi_j\rangle}.$$
\end{cor}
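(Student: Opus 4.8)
The plan is to reduce the count directly to Corollary \ref{cor 1} by unwinding the fixed-point condition $g\sigma=\sigma$ one arrow at a time. First I would rewrite what it means for $\sigma$ to be stabilized by $g=(J_{\pi_1}(f),\dots,J_{\pi_n}(f))$. By definition of the action, $g\sigma=\sigma$ holds precisely when $g_i^{-1}\sigma(X_{ij}^{(k)})g_j=\sigma(X_{ij}^{(k)})$ for every arrow $X_{ij}^{(k)}\in\Omega$. Multiplying on the left by $g_i=J_{\pi_i}(f)$, this is equivalent to the single commutation relation $J_{\pi_i}(f)\,\sigma(X_{ij}^{(k)})=\sigma(X_{ij}^{(k)})\,J_{\pi_j}(f)$ for each arrow.

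The crucial structural observation is that these conditions decouple completely: the constraint imposed on the matrix $\sigma(X_{ij}^{(k)})$ attached to one arrow involves no other arrow's matrix, so $X_g$ is a direct product, over the set $\Omega$ of all arrows, of the solution sets of the individual commutation equations. Consequently $|X_g|$ is the product over all arrows of the number of matrices $U$ of the appropriate size satisfying $J_{\pi_i}(f)U=UJ_{\pi_j}(f)$.

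I would then invoke Corollary \ref{cor 1} with $\lambda=\pi_i$ and $\mu=\pi_j$: for each arrow from $i$ to $j$ the number of admissible matrices is exactly $q^{\langle\pi_i,\pi_j\rangle}$. Since there are $a_{ij}$ arrows from $i$ to $j$ (and the corresponding factor is simply $1$ when $a_{ij}=0$), the block of arrows between the ordered pair $(i,j)$ contributes $\bigl(q^{\langle\pi_i,\pi_j\rangle}\bigr)^{a_{ij}}$. Taking the product over all $1\le i,j\le n$ and collecting exponents yields
$$|X_g|=\prod_{1\le i,j\le n}\bigl(q^{\langle\pi_i,\pi_j\rangle}\bigr)^{a_{ij}}=q^{\sum_{1\le i,j\le n}a_{ij}\langle\pi_i,\pi_j\rangle},$$
as claimed.

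There is no serious obstacle here; the statement is essentially a packaging of Corollary \ref{cor 1}. The only points requiring care are bookkeeping ones: tracking the left/right placement of $g_i$ and $g_j$ so that the resulting equation matches the hypothesis $J_\lambda(f)U=UJ_\mu(f)$ of Corollary \ref{cor 1} (here the symmetry $\langle\pi_i,\pi_j\rangle=\langle\pi_j,\pi_i\rangle$ furnishes a convenient sanity check), and verifying that the solution count $q^{\langle\pi_i,\pi_j\rangle}$ depends only on the pair $(\pi_i,\pi_j)$ and not on the particular arrow index $k$, which is what licenses raising it to the power $a_{ij}$.
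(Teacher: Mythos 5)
Your argument is correct and is essentially identical to the paper's proof: the authors likewise rewrite $g\sigma=\sigma$ as the per-arrow commutation condition $J_{\pi_i}(f)\,\sigma(X_{ij}^{(k)})=\sigma(X_{ij}^{(k)})\,J_{\pi_j}(f)$, observe that $X_g$ factors as a product over $\Omega$, and apply Corollary \ref{cor 1} to each factor. No further comment is needed.
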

\begin{proof}
For $\sigma\in\text{Rep}(\alpha,\mathbb{F}_q)$, $\sigma\in X_g$ if and only if $ J_{\pi_i}\!(f)\sigma(X_{ij}^{(k)}) = \sigma(X_{ij}^{(k)}) J_{\pi_j}\!(f)$ 
for all $X_{ij}^{(k)} \in \Omega$,.
Thus
\begin{align*}
|X_g| &= \prod_{X_{ij}^{(k)}\in \Omega} |\left\{ U\in\text{Mat}(|\pi_i|\!\times\!|\pi_j|, \mathbb{F}_q) : J_{\pi_i}\!(f)U = UJ_{\pi_j}\!(f) \right\}|
\end{align*}
Corollary \ref{cor 1} implies that 
$$ |X_g| = \prod_{X_{ij}^{(k)}\in \Omega} q^{\langle\pi_i,\pi_j\rangle} 
	       = \prod_{1\le i,j\le n}q^{a_{ij}\langle\pi_i,\pi_j\rangle}
             = q^{\sum_{1\le i,j\le n} a_{ij}\langle\pi_i,\pi_j\rangle}. $$
\end{proof}

\begin{dfn}
Let $U=[u_{ij}]$ be a type-U matrix of order $m\times n$, the \textit{core} of $U$, denoted by $U_0$, is defined as follows:
\begin{align*}
    U_0=
    \begin{cases}
      \quad 0 &\!\!\!\text{matrix of order $m\times n$ if $m \ne n$}, \\
      u_{11}I, &\!\!\!\text{where $I$ is the identity matrix of order $n$ if $m = n$}.
    \end{cases}
 \end{align*}
Obviously, the core of a type-U matrix is also a type-U matrix.
If $U = [U_{ij}]$ is a block matrix of type-U matrices, then the \textit{core} of $U$, denoted by $U_0$, is the block matrix $[(U_{ij})_0]$. 
\end{dfn}
The following is an example of a 
block matrix of type-U matrices and its core:
$$
\newcommand*{\temp}{\multicolumn{1}{|}{}}
U=\left[
\begin{array}{cccccccccc}
a & b & c & \temp & h & i & j & \temp & m & n \\ 
0 & a & b & \temp & 0 & h & i & \temp & 0 & m \\
0 & 0 & a & \temp & 0 & 0 & h & \temp & 0 & 0 \\
\cline{1-10}
0 & p & q & \temp & 0 & d & e & \temp & k & l \\ 
0 & 0 & p & \temp & 0 & 0 & d & \temp & 0 & k \\
\cline{1-10}
0 & u & v & \temp & 0 & r & s & \temp & f & g \\ 
0 & 0 & u & \temp & 0 & 0 & r & \temp & 0 & f 
\end{array}
\right], 
U_0=\left[
\begin{array}{cccccccccc}
a & 0 & 0 & \temp & h & 0 & 0 & \temp & 0 & 0 \\ 
0 & a & 0 & \temp & 0 & h & 0 & \temp & 0 & 0 \\
0 & 0 & a & \temp & 0 & 0 & h & \temp & 0 & 0 \\
\cline{1-10}
0 & 0 & 0 & \temp & 0 & 0 & 0 & \temp & k & 0 \\ 
0 & 0 & 0 & \temp & 0 & 0 & 0 & \temp & 0 & k \\
\cline{1-10}
0 & 0 & 0 & \temp & 0 & 0 & 0 & \temp & f & 0 \\ 
0 & 0 & 0 & \temp & 0 & 0 & 0 & \temp & 0 & f 
\end{array}
\right].
$$

\begin{lem}\label{lemma 1}
Let $M$ be an $m\times n$ type-U matrix and $N$ an $n\times k$ type-U matrix, then MN is an $m \times k$ type-U matrix and $(MN)_0 = M_0N_0$.
\end{lem}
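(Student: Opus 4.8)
The plan is to describe a type-U matrix by its values along constant diagonals and to read off both claims from the ordinary convolution of these diagonal data.

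First I would record the combinatorial content of the two defining conditions. Writing $M=[m_{il}]$ for an $m\times n$ type-U matrix, the arm length of $(i,l)$ equals $n+i-l$, so entries sharing an arm length are exactly those on a fixed diagonal $l-i=r$; the first condition therefore says $M$ is constant along diagonals, i.e.\ $m_{il}=\mu_{l-i}$ for a sequence $(\mu_r)$, while the arm-rank bound $\le\min\{m,n\}$ says $\mu_r=0$ unless $\max\{0,n-m\}\le r\le n-1$. I would record the analogous data $\nu_s$ for $N$, supported on $\max\{0,k-n\}\le s\le k-1$, and note that the core extracts the main-diagonal value: $M_0=0$ unless $m=n$, in which case $M_0=\mu_0 I$.

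Next I would compute the product entrywise. The entry $(MN)_{il}=\sum_{j=1}^n \mu_{j-i}\nu_{l-j}$ appears to depend on $i$ and $l$ separately, and indeed a product of arbitrary Toeplitz matrices need not be Toeplitz; \emph{this is the main obstacle}. The key observation is that if one extends the summation over all $j\in\mathbb Z$, the result is the convolution $c_{l-i}:=\sum_{r}\mu_r\,\nu_{(l-i)-r}$, which depends only on $l-i$. It then suffices to show the extra terms with $j\le 0$ or $j>n$ contribute nothing: for $j\le 0$ the index $j-i$ is negative and so lies below the support of $\mu$, and for $j>n$ the index $l-j$ satisfies $l-j<\max\{0,k-n\}$ and so lies below the support of $\nu$. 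Both facts use precisely the arm-rank hypotheses, so the corrections vanish and $MN$ is Toeplitz with diagonal values $c_t=\sum_r\mu_r\nu_{t-r}$.

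I would then check that $MN$ is type-U by tracking supports: a nonzero $c_t$ forces some $r$ with $r\ge\max\{0,n-m\}$ and $t-r\ge\max\{0,k-n\}$, whence $t\ge\max\{0,n-m\}+\max\{0,k-n\}\ge\max\{0,k-m\}$, which is exactly the arm-rank bound $\le\min\{m,k\}$ for an $m\times k$ matrix. Finally, for the core identity I would split on the dimensions. If $m\ne k$, both sides are zero, since $m\ne k$ forces $m\ne n$ or $n\ne k$ and hence $M_0N_0=0$. If $m=k$, the only issue is the scalar $c_0=\sum_r\mu_r\nu_{-r}$: when $n=m=k$ the supports of $\mu_r$ and $\nu_{-r}$ meet only at $r=0$, giving $c_0=\mu_0\nu_0$ and $(MN)_0=\mu_0\nu_0 I=M_0N_0$; when $n\ne m=k$ the two supports are disjoint, so $c_0=0=M_0N_0$. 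A more conceptual alternative for the first claim would be to invoke Theorem \ref{T-A Thm} and Corollary \ref{cor 1}: the theorem shows every solution of $J_m(f)U=UJ_n(f)$ is type-U, and the corollary shows there are $q^{\min\{m,n\}}$ such solutions over $\mathbb F_q$, matching the count of $m\times n$ type-U matrices, so the two sets coincide; composing the intertwining relations then gives $J_m(f)(MN)=(MN)J_k(f)$, whence $MN$ is type-U, and I would still finish the core claim by the short support computation above.
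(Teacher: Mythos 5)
Your argument is correct and complete: the identification of type-U matrices with Toeplitz matrices supported on diagonals $r\ge\max\{0,n-m\}$ is exactly the content of the two defining conditions, the extension of the sum over $j$ to all of $\mathbb{Z}$ is justified precisely by those support bounds, and the support/convolution bookkeeping for the arm rank of $MN$ and for the core $(MN)_0=M_0N_0$ checks out in every case. The paper explicitly leaves the proof of this lemma to the reader, so there is nothing to compare against, but your convolution argument is the natural elementary one; your alternative route via the intertwining relations $J_m(f)M=MJ_n(f)$ and the counting in Corollary \ref{cor 1} is also valid (over $\mathbb{F}_q$), though it still needs your direct support computation to settle the core identity.
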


\begin{lem}\label{lemma 2}
Let $M=[M_{ij}]$ be an $m\times n$ block matrix of type-U matrices and $N=[N_{ij}]$ an $n\times k$ block matrix of type-U matrices such that $M$ and $N$ have compatible 
multiplication orders, i.e., the number of columns in $M_{is}$ is equal to
the number of rows in $N_{sj}$ for $1\le i \le m, 1 \le j \le k$ and $1\le s \le n$. Then MN is an $m \times k$ block matrix of type-U matrices and $(MN)_0 = M_0N_0$.
\end{lem}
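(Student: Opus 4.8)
The plan is to reduce everything to the single-block case already settled in Lemma~\ref{lemma 1}, together with two elementary closure facts for type-U matrices. Writing out the block product, the $(i,j)$-th block of $MN$ is
\[
(MN)_{ij}=\sum_{s=1}^{n}M_{is}N_{sj}.
\]
The compatibility hypothesis guarantees that each product $M_{is}N_{sj}$ is defined, so by Lemma~\ref{lemma 1} every summand is itself a type-U matrix with core $(M_{is})_0(N_{sj})_0$. What remains is to check that the sum over $s$ stays within the class of type-U matrices and that the core interacts correctly with that sum.

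First I would pin down the dimensions. In any block matrix all blocks in block-row $i$ share a common number of rows $p_i$, and all blocks in block-column $j$ share a common number of columns $r_j$; hence for every $s$ the product $M_{is}N_{sj}$ has the same order $p_i\times r_j$, so $\sum_s M_{is}N_{sj}$ is a genuine sum of type-U matrices of one fixed order. I would then observe that type-U matrices of a fixed order $p\times r$ are closed under addition: the defining constant-along-arm-length condition is preserved entrywise under $+$, and since the non-zero positions of a sum lie in the union of the non-zero positions of the summands, the arm rank of the sum is at most the larger of the two arm ranks, hence at most $\min\{p,r\}$. Therefore each $(MN)_{ij}$ is type-U, and $MN$ is a block matrix of type-U matrices.

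For the core identity I would use that the core operation is additive on type-U matrices of a fixed order. When $p\ne r$ every such core is the zero matrix, so additivity is trivial; when $p=r$ the core of a matrix $A=[a_{st}]$ is $a_{11}I$, and $a_{11}$ depends additively on $A$, so $(A+B)_0=A_0+B_0$ in either case. Combining this additivity with the core formula of Lemma~\ref{lemma 1} gives
\[
\bigl((MN)_{ij}\bigr)_0=\Bigl(\sum_{s}M_{is}N_{sj}\Bigr)_0=\sum_{s}\bigl(M_{is}N_{sj}\bigr)_0=\sum_{s}(M_{is})_0(N_{sj})_0,
\]
and the right-hand side is precisely the $(i,j)$-th block of $M_0N_0=[(M_{ij})_0]\,[(N_{ij})_0]$. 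Running this over all $(i,j)$ yields $(MN)_0=M_0N_0$.

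The only genuinely substantive point is the dimension bookkeeping in the first step: one must be certain that varying the index $s$ never changes the order of the summands, so that ``sum of type-U matrices'' is literally an addition inside one fixed matrix space rather than a formal expression. Once that is nailed down, the closure-under-addition and additivity-of-core facts are immediate, and Lemma~\ref{lemma 1} supplies all the per-summand content.
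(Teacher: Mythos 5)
Your argument is correct: the paper leaves this proof to the reader, and the intended route is exactly the one you take --- expand each block of $MN$ as $\sum_{s}M_{is}N_{sj}$, apply Lemma~\ref{lemma 1} to each summand, and use that type-U matrices of a fixed order form a class closed under addition on which the core is additive. The dimension bookkeeping you single out is indeed the only point needing care, and your justification (blocks in a block-row share a row count, blocks in a block-column share a column count, so every summand has the fixed order $p_i\times r_j$) settles it.
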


The details of the proofs of the above two lemmas are left to the reader.

\begin{lem}\label{lemma 3}
Let $\lambda\in\mathcal{P}$, $f(x)=x-a_0\in\mathbb{F}_q[x]$ and $U$ a block matrix of type-U matrices satisfying $J_\lambda(f) U = UJ_\lambda(f)$.
Then $U$ is nilpotent if and only if $U_0$ is nilpotent.
\end{lem}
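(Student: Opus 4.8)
The plan is to prove the stronger statement that $U$ and its core $U_0$ have the \emph{same characteristic polynomial}; the lemma then follows at once, since a square matrix is nilpotent precisely when its characteristic polynomial is a power of $x$. To reach the characteristic polynomial I would exhibit a filtration of the underlying space that is preserved by $U$ and whose associated graded operator is exactly $U_0$.

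Concretely, write $V=\bigoplus_{i=1}^s V_i$, where $V_i$ has basis $e^{(i)}_1,\dots,e^{(i)}_{\lambda_i}$ and $J_\lambda(f)$ acts on $V_i$ as the $i$-th Jordan block; since $U$ commutes with $J_\lambda(f)=a_0 I+N$ it commutes with the nilpotent part $N$, which is exactly what forces the type-U block shape. Assign to each basis vector the degree $\delta(e^{(i)}_p)=2p-\lambda_i$ and set $V^{\le d}=\operatorname{span}\{e^{(i)}_p:\delta(e^{(i)}_p)\le d\}$. The central claim is that $U$ never raises $\delta$, i.e. $U(V^{\le d})\subseteq V^{\le d}$ for all $d$, and that the part of $U$ preserving $\delta$ exactly coincides with $U_0$. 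Granting this, $U$ preserves the flag $\{V^{\le d}\}$, the induced operator on each quotient $V^{\le d}/V^{\le d-1}$ is the corresponding block-diagonal piece of $U_0$, and therefore $\det(xI-U)=\det(xI-U_0)$, which settles both directions simultaneously.

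The heart of the argument, and the step I expect to be the main obstacle, is verifying this degree claim from the explicit type-U pattern. Fixing a source vector $e^{(j)}_q$, the $(i,j)$ block sends it to a combination of the $e^{(i)}_p$ whose coefficient is governed by the arm length $\lambda_j-q+p$ and is nonzero only for $1\le \lambda_j-q+p\le\min(\lambda_i,\lambda_j)$. One then checks the three cases $\lambda_i=\lambda_j$, $\lambda_i>\lambda_j$ and $\lambda_i<\lambda_j$: in each the largest index $p$ that occurs satisfies $\delta(e^{(i)}_p)\le\delta(e^{(j)}_q)$, with equality only when $\lambda_i=\lambda_j$ and $p=q$, in which case the coefficient is precisely the constant diagonal value of the block, that is, the corresponding entry of $U_0$. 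The delicate case is $\lambda_i>\lambda_j$, a ``short'' block mapping into a ``long'' one: there the leading term keeps the positional index $p$ fixed, so it is only the $-\lambda_i$ correction in $\delta$ that makes the degree drop strictly and keeps this term out of the core. This bookkeeping is exactly where the definitions of arm length, arm rank and type-U get used.

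Finally, I would note that Lemma \ref{lemma 2} provides a shortcut for part of this. Since the core is multiplicative, induction gives $(U^k)_0=(U_0)^k$, so if $U$ is nilpotent then $(U_0)^k=(U^k)_0=0$ and $U_0$ is nilpotent, yielding the forward implication with no further work. For the converse one may instead reduce to the special case $U_0=0$: if $(U_0)^k=0$ then $W:=U^k$ is a block matrix of type-U matrices with $W_0=0$, and the degree argument above, now with no degree-preserving part at all, shows that such a $W$ is strictly filtration-decreasing, hence nilpotent, whence $U$ is nilpotent. I would present the characteristic-polynomial argument as the main proof, since it dispatches both directions at once, and use the Lemma \ref{lemma 2} route as a cross-check.
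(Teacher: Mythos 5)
Your proof is correct, and your main argument takes a genuinely different route from the paper's. The paper's proof rests on two ingredients: the multiplicativity of the core, $(MN)_0=M_0N_0$ (Lemma \ref{lemma 2}), and the claim, asserted with no detail beyond ``by induction on the number of distinct parts in $\lambda$,'' that $U-U_0$ is always nilpotent; the forward direction is then $(U_0)^m=(U^m)_0=0$, and the converse observes that $U^m=U^m-(U^m)_0$ is nilpotent once $(U^m)_0=(U_0)^m=0$. Your main argument replaces this with a filtration: the degree $\delta(e^{(i)}_p)=2p-\lambda_i$ is never raised by $U$ --- the arm-length constraint $p+\lambda_j-q\le\min(\lambda_i,\lambda_j)$ is equivalent to $2(p-q)\le\lambda_i-\lambda_j$ with equality forcing $\lambda_i=\lambda_j$ and $p=q$ --- and the degree-preserving part of $U$ is exactly $U_0$, so $U$ and $U_0$ have the same characteristic polynomial. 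That is a strictly stronger conclusion, it settles both implications at once, and as a by-product it furnishes an explicit proof of the paper's unproved key claim, since $U-U_0$ is the strictly degree-lowering part and hence nilpotent. Your ``cross-check'' route is essentially the paper's proof verbatim. What the paper's route buys is economy --- Lemma \ref{lemma 2} is already on record and no new bookkeeping is needed, at the cost of leaving the nilpotency of $U-U_0$ to the reader; what your route buys is self-containedness and the sharper statement $\det(xI-U)=\det(xI-U_0)$, with the three-case arm-length verification (which you correctly identify, and correctly carry out, as the crux) made explicit rather than implicit.
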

\begin{proof}
It can be shown by induction on the number of distinct parts in $\lambda$ that $U-U_0$ is always nilpotent. Suppose that $U$ is nilpotent, then $U^m=0$ for some $m\in\mathbb{N}$. Thus Lemma \ref{lemma 2} implies that
$(U_0)^m = (U^m)_0 = 0$. Thus $U_0$ is nilpotent. Conversely, suppose that $U_0$ is nilpotent, then $(U_0)^m=0$ for some $m\in\mathbb{N}$. Lemma \ref{lemma 2} implies that $(U^m)_0  = (U_0)^m = 0$.
Since $U^m - (U^m)_0$ is always nilpotent,  $U^m$ is nilpotent, and hence $U$ is nilpotent.
\end{proof}

\begin{thm}\label{degree 1}
Given $\pi=(\pi_1,\cdots,\pi_n)\in\mathcal{P}^n$ and $f(x)=x-a_0\in\mathbb{F}_q[x]$, let $\alpha=(|\pi_1|,\cdots,|\pi_n|)\in\mathbb{N}^n$ and $g=(J_{\pi_1}\!(f),\cdots,J_{\pi_n}\!(f))\in\textup{GL}(\alpha,\mathbb{F}_q)$, and
$X_g=\left\{\sigma\in\textup{Rep}(\alpha,\mathbb{F}_q) : g\sigma = \sigma\right\}$ the stabilizer of $g$ in $\textup{Rep}(\alpha, \mathbb{F}_q)$.
There holds:
$$ |X_g \cap \textup{Rep}(\alpha, \mathbb{F}_q)_\mathcal{R}| = q^{\sum_{1\le i,j\le n} a_{ij}(|\pi_i,\pi_j|)} \prod_{s\ge 1}\!r(d_{\pi}^{\,\underline{s}} , q),$$
where $d_\pi^{\,\underline{s}} = (m_{\pi_1}^{\underline{s}}, \cdots, m_{\pi_n}^{\underline{s}})$ is the multiplicity vector of order $s$ induced by $\pi$ and $r(d_{\pi}^{\,\underline{s}} , q) = |\textup{Rep}(d_{\pi}^{\,\underline{s}}, \mathbb{F}_{q})_\mathcal{R}|$ for $s\ge 1$.
\end{thm}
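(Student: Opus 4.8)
The plan is to present $X_g \cap \textup{Rep}(\alpha, \mathbb{F}_q)_\mathcal{R}$ as the total space of a fibration over a set of admissible ``core data'', all of whose fibers have the same size. By Theorem \ref{T-A Thm}, every $\sigma\in X_g$ is recorded by a family of block matrices of type-U matrices $\sigma(X_{ij}^{(k)})$, one for each arrow $X_{ij}^{(k)}\in\Omega$, the blocks being indexed by the parts of $\pi_i$ (rows) and of $\pi_j$ (columns). First I would pass to the cores $\sigma(X_{ij}^{(k)})_0$ and show that this ``core datum'' is nothing but a family of honest representations of $\Gamma$ of the smaller dimension vectors $d_\pi^{\,\underline{s}}$.

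Concretely, I would regroup the Jordan blocks of $J_{\pi_i}(f)$ according to their common size $s$; since $\deg f = 1$, the size-$s$ blocks occupy an $s\,m_{\pi_i}^{\underline{s}}$-dimensional subspace. A type-U sub-block of $\sigma(X_{ij}^{(k)})$ attached to a part of size $\lambda_a$ of $\pi_i$ and a part of size $\mu_b$ of $\pi_j$ has core $0$ unless $\lambda_a=\mu_b$, in which case it is the scalar matrix $(u_{11})I$. Reading off these scalars size by size shows that, after reordering the basis, $\sigma(X_{ij}^{(k)})_0$ is block-diagonal across sizes and equals $\bigoplus_{s\ge 1}\bigl(M_s^{(ij,k)}\otimes I_s\bigr)$ for a scalar matrix $M_s^{(ij,k)}$ of order $m_{\pi_i}^{\underline{s}}\times m_{\pi_j}^{\underline{s}}$. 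Setting $\rho^{(s)}(X_{ij}^{(k)})=M_s^{(ij,k)}$ defines, for each $s$, a representation $\rho^{(s)}\in\textup{Rep}(d_\pi^{\,\underline{s}},\mathbb{F}_q)$, and $\sigma\mapsto(\rho^{(s)})_{s\ge 1}$ is the core-datum map.

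Next I would show that the nilpotent-relation condition factorizes through this map. Because taking cores is linear in the entries and multiplicative on block matrices of type-U matrices (Lemma \ref{lemma 2}), and because $(M\otimes I_s)(N\otimes I_s)=MN\otimes I_s$, for any cyclic relation $R\in\mathcal{R}$ (whose common starting and ending vertex makes $\sigma(R)$ square) one obtains $\sigma(R)_0=\bigoplus_{s\ge 1}\bigl(\rho^{(s)}(R)\otimes I_s\bigr)$. By Lemma \ref{lemma 3}, $\sigma(R)$ is nilpotent iff $\sigma(R)_0$ is nilpotent, which by the displayed direct-sum form holds iff $\rho^{(s)}(R)$ is nilpotent for every $s$. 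Hence $\sigma\in\textup{Rep}(\alpha,\mathbb{F}_q)_\mathcal{R}$ iff $\rho^{(s)}\in\textup{Rep}(d_\pi^{\,\underline{s}},\mathbb{F}_q)_\mathcal{R}$ for all $s$, so the admissible core data are exactly the tuples $(\rho^{(s)})_s$ with $\rho^{(s)}\in\textup{Rep}(d_\pi^{\,\underline{s}},\mathbb{F}_q)_\mathcal{R}$, of which there are $\prod_{s\ge 1}r(d_\pi^{\,\underline{s}},q)$.

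Finally I would count a single fiber. Fixing the core datum amounts to fixing, in each \emph{square} type-U sub-block, the one scalar $u_{11}$, while all remaining entries stay free; a type-U sub-block of order $\lambda_a\times\mu_b$ carries $\min(\lambda_a,\mu_b)$ free parameters, losing exactly one to the core precisely when $\lambda_a=\mu_b$. Summing over all sub-blocks of the arrow $X_{ij}^{(k)}$ gives $\langle\pi_i,\pi_j\rangle-\sum_{s\ge 1}m_{\pi_i}^{\underline{s}}m_{\pi_j}^{\underline{s}}=(|\pi_i,\pi_j|)$ free parameters, so each arrow contributes $q^{(|\pi_i,\pi_j|)}$ and the whole fiber has the constant size $q^{\sum_{1\le i,j\le n}a_{ij}(|\pi_i,\pi_j|)}$, independent of the chosen core datum (as a check, multiplying this by the number $\prod_s q^{\sum_{i,j}a_{ij}m_{\pi_i}^{\underline{s}}m_{\pi_j}^{\underline{s}}}$ of \emph{all} core data recovers $|X_g|$ from Corollary \ref{full stabilizer}). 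Multiplying the uniform fiber size by the number of admissible core data yields the claimed formula. The step I expect to require the most care is the second one: verifying that the core datum genuinely assembles into representations of dimension $d_\pi^{\,\underline{s}}$ and that products and linear combinations of arrow matrices are compatible with the size-graded decomposition $\bigoplus_s M\otimes I_s$, since both the factorization of the nilpotency condition and the uniform fiber count rest on it.
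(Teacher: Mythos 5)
Your proposal is correct and follows essentially the same route as the paper: pass to the core $\sigma_0$ via Theorem \ref{T-A Thm}, use Lemmas \ref{lemma 2} and \ref{lemma 3} to show nilpotency of $\sigma(R)$ is equivalent to nilpotency of the core datum, identify the cores (after regrouping Jordan blocks by size) with tuples of representations of dimensions $d_\pi^{\,\underline{s}}$, and multiply the uniform fiber size $q^{\sum a_{ij}(|\pi_i,\pi_j|)}$ by the count $\prod_s r(d_\pi^{\,\underline{s}},q)$ of admissible core data. Your $M_s\otimes I_s$ formulation is just a compact restatement of the paper's explicit base change producing $s$ copies of $\delta^{(s)}$, and your explicit fiber count makes precise what the paper deduces by subtracting from Corollary \ref{full stabilizer}.
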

\begin{proof}

For any $\sigma\in X_g$, Theorem \ref{T-A Thm} implies that $\sigma(X_{ij}^{(k)})$ is a block matrix of type-U matrices for all
$X_{ij}^{(k)}\in \Omega$. The \textit{core} of $\sigma$ denoted by $\sigma_0$, is the representation of $\Gamma$ defined by:
$\sigma_0(X_{ij}^{(k)}) = \sigma(X_{ij}^{(k)})_0$ for all $1\le i , j\le n$ and $1\le k \le a_{ij}$. Obviously, $\sigma_0\in X_g$.

Let $R$ be a simple cyclic relation for $\Gamma$ and assume that the starting point (also the ending point) of $R$ is $e$. Then $\sigma(R)$ is a block matrix of type-U matrices 
and it satisfies $J_{\pi_e}\!(f) \sigma(R) = \sigma(R) J_{\pi_e}\!(f)$. Lemma \ref{lemma 2} implies that $\sigma(R)_0 = \sigma_0(R)$ and Lemma \ref{lemma 3} implies that 
$\sigma(R)$ is nilpotent if and only if $\sigma_0(R)$ is nilpotent. This equivalence still holds when $R$ is a relation for $\Gamma$. Thus we have
$$\sigma \in X_g \cap \textup{Rep}(\alpha, \mathbb{F}_q)_\mathcal{R} \textit{ if and only if } \sigma_0 \in X_g \cap \textup{Rep}(\alpha, \mathbb{F}_q)_\mathcal{R}.$$

Let $\pi^{(s)} = (\pi_1^{(s)}, \cdots, \pi_n^{(s)}) := (s^{m_{\pi_1}^{\underline{s}}}, \cdots, s^{m_{\pi_n}^{\underline{s}}}) \in \mathcal{P}^n$ for $s\ge 1$, where each component 
$s^{m_{\pi_i}^{\underline{s}}}$ ($i=1,\cdots,n$) is a partition in its ``exponential form'',
$\alpha^{(s)} = (sm_{\pi_1}^{\underline{s}}, \cdots, sm_{\pi_n}^{\underline{s}}) \in \mathbb{N}^n$ and
$g^{(s)} = (J_{\pi_1^{(s)}}\!(f), \cdots, J_{\pi_n^{(s)}}\!(f)) \in \text{GL}(\alpha^{(s)}, \mathbb{F}_q)$.

Since every matrix $\sigma_0(X_{ij}^{(k)})$ for $X_{ij}^{(k)}\in \Omega$ is a block matrix and all of its non-square submatrices are $0$,  $\sigma_0$ can be written as a direct sum of representations of $\Gamma$ in the following form:
$$\sigma_0 \cong \oplus_{s\ge 1}\tau^{(s)}, $$
where $\tau^{(s)} \in \text{Rep}(\alpha^{(s)}, \mathbb{F}_q)$ and $\tau^{(s)} \in X_{g^{(s)}}$ for all $s\ge 1$. There are only finitely many terms in the above sum because 
$(|\pi_1|, \cdots, |\pi_n|) = \dim \sigma_0 =  \sum_{s\ge 1}\dim \tau^{(s)}$.

Treating every matrix $\tau^{(s)}(X_{ij}^{(k)})$ for $X_{ij}^{(k)}\in \Omega$ as a linear transformation between vector spaces, and applying a base change in the underlying vector space for vertex $v$ 
($1\le v \le n$) which has dimension $sm_{\pi_v}^{\underline{s}}$ by the following mapping:
$$
\left[
\arraycolsep=1.4pt
\begin{array}{llll}
v_1 & v_2  & \cdots & v_m \\ 
v_{m+1} & v_{m+2} & \cdots & v_{m+m} \\ 
\vdots & \vdots & \cdots & \vdots \\
v_{(s-1)m+1} & v_{(s-1)m+2} & \cdots & v_{(s-1)m+m} 
\end{array}
\right]
\mapsto
\left[
\arraycolsep=3pt
\begin{array}{llll}
v_1 & v_{s+1} & \cdots & v_{(m-1)s+1} \\ 
v_2 & v_{s+2} & \cdots & v_{(m-1)s+2} \\ 
\vdots & \vdots & \cdots & \vdots \\
v_{s} & v_{s+s}  & \cdots & v_{(m-1)s+s} 
\end{array}
\right],
$$
where $m=m_{\pi_v}^{\underline{s}}$, it transforms $\tau^{(s)}$ into $s$ copies of identical representations:
$$\tau^{(s)} \, \cong \, \underbrace{\delta^{(s)} \oplus \cdots \oplus \delta^{(s)}}_\text{$s$ copies},$$
where $\dim \delta^{(s)} = (m_{\pi_1}^{\underline{s}}, \cdots, m_{\pi_n}^{\underline{s}}) = d_{\pi}^{\,\underline{s}}.$

For example, let $\Gamma = \tilde{A}_3$, the quiver with 4 vertices and 4 arrows which form a loop, $\pi=((2^23^2),(2^24^1),(2^13^1),(1^22^1) )\in\mathcal{P}^4$, 
then $\pi^{(2)} = (2^2, 2^2, 2^1, 2^1) \in \mathcal{P}^4$, $g^{(2)} = (J_{(2^2)}\!(f), J_{(2^2)}\!(f), J_{(2^1)}\!(f), J_{(2^1)}\!(f))$, every 
$\tau^{(2)}\in X_{g^{(2)}}$ should have the form as the left diagram below.
After bases are changed in the underlying vector spaces  as described, the representation on the left can be transformed into the representation on the right:
\[
\begin{tikzcd}[row sep=huge, column sep = huge, ampersand replacement=\&]
\overset{1}{\circ} \arrow[r, "{\newcommand*{\temp}{\multicolumn{1}{|}{}}
\left(\arraycolsep=2pt \def\arraystretch{0.8} \begin{array}{ccccc}
a & 0 & \temp & b & 0 \\ 
0 & a & \temp & 0 & b  \\ 
\cline{1-5} 
c & 0 & \temp & d & 0\\ 
0 & c & \temp & 0 & d
\end{array}\right)}"] 
\& \overset{2}{\circ} \arrow[d, "{\newcommand*{\temp}{\multicolumn{1}{|}{}}
\left(\arraycolsep=2pt \def\arraystretch{0.8} \begin{array}{cc}
e & 0 \\ 
0 & e \\ 
\cline{1-2} 
f & 0 \\ 
0 & f 
\end{array}\right)}" ] \\
\underset{4}{\circ} \arrow[u, "{\newcommand*{\temp}{\multicolumn{1}{|}{}}
\left(\arraycolsep=2pt \def\arraystretch{0.8} \begin{array}{ccccc}
h & 0 & \temp & i & 0 \\ 
0 & h & \temp & 0 & i  
\end{array}\right)}" ]
\& \underset{3}{\circ} \arrow[l, "{\newcommand*{\temp}{\multicolumn{1}{|}{}}
\left(\arraycolsep=2pt \def\arraystretch{0.8} \begin{array}{cc}
g & 0 \\ 
0 & g 
\end{array}\right)}"]
\end{tikzcd}
\mapsto
\begin{tikzcd}[row sep=huge, column sep = huge, ampersand replacement=\&]
\overset{1}{\circ} \arrow[r, "{\newcommand*{\temp}{\multicolumn{1}{|}{}}
\left(\arraycolsep=2pt \def\arraystretch{0.8} \begin{array}{ccccc}
a & b & \temp & 0 & 0 \\ 
c & d & \temp & 0 & 0 \\ 
\cline{1-5} 
0 & 0 & \temp & a & b \\ 
0 & 0 & \temp & c & d
\end{array}\right)}"] 
\& \overset{2}{\circ} \arrow[d, "{\newcommand*{\temp}{\multicolumn{1}{|}{}}
\left(\arraycolsep=2pt \def\arraystretch{0.8} \begin{array}{ccc}
e & \temp & 0 \\ 
f & \temp & 0 \\ 
\cline{1-3} 
0 & \temp & e \\ 
0 & \temp & f 
\end{array}\right)}" ] \\
\underset{4}{\circ} \arrow[u, "{\newcommand*{\temp}{\multicolumn{1}{|}{}}
\left(\arraycolsep=2pt \def\arraystretch{0.8} \begin{array}{ccccc}
h & i & \temp & 0 & 0 \\ 
\cline{1-5} 
0 & 0 & \temp & h & i  
\end{array}\right)}" ]
\& \underset{3}{\circ} \arrow[l, "{\newcommand*{\temp}{\multicolumn{1}{|}{}}
\left(\arraycolsep=2pt \def\arraystretch{0.8} \begin{array}{ccc}
g & \temp & 0 \\ 
\cline{1-3} 
0 & \temp & g 
\end{array}\right)}"]
\end{tikzcd}.
\]

It follows that $\sigma_0$ respects a relation $R$ for $\Gamma$ nilpotently if and only if $\tau^{(s)}$ respects $R$ nilpotently for all $s\ge 1$, if and only if
$\delta^{(s)}$ respects $R$ nilpotently for all $s\ge 1$, i.e., 
$$
\sigma_0 \in X_g \cap \textup{Rep}(\alpha, \mathbb{F}_q)_\mathcal{R} \textit{ if and only if }
\delta^{(s)} \in \textup{Rep}(d_{\pi}^{\,\underline{s}}, \mathbb{F}_q)_\mathcal{R} \textit{ for all } s\ge 1.
$$
Since $|\textup{Rep}(d_{\pi}^{\,\underline{s}}, \mathbb{F}_q)_\mathcal{R}| = r(d_{\pi}^{\,\underline{s}} , q)$, Corollary \ref{full stabilizer} implies that 
\begin{align*}
|X_g \cap \textup{Rep}(\alpha, \mathbb{F}_q)_\mathcal{R}| & = q^{\sum_{1\le i,j\le n}(a_{ij}\langle\pi_i,\pi_j\rangle - a_{ij}\sum_{s\ge 1}m_{\pi_i}^{\underline{s}}m_{\pi_j}^{\underline{s}})} \prod_{s\ge 1}\!r(d_{\pi}^{\,\underline{s}} , q)\\
& = q^{\sum_{1\le i,j\le n} a_{ij}(|\pi_i,\pi_j|)}  \prod_{s\ge 1}\!r(d_{\pi}^{\,\underline{s}} , q).
\end{align*}
\end{proof}

\begin{thm}\label{stabilizers}
Given $\pi=(\pi_1,\cdots,\pi_n)\in\mathcal{P}^n$ an $n$-tuple of partitions and $f(x)\in\mathbb{F}_q[x]$ a monic irreducible polynomial of degree $d$, 
let $\alpha=d(|\pi_1|,\cdots,|\pi_n|)$, $g=(J_{\pi_1}\!(f),\cdots,J_{\pi_n}\!(f))\in\textup{GL}(\alpha,\mathbb{F}_q)$ and
$X_g=\left\{\sigma\in\textup{Rep}(\alpha,\mathbb{F}_q) : g\sigma = \sigma\right\}$ the stabilizer of $g$ in $ \textup{Rep}(\alpha, \mathbb{F}_q)$.
There holds:
$$ |X_g \cap \textup{Rep}(\alpha, \mathbb{F}_q)_\mathcal{R}| = q^{d\sum_{1\le i,j\le n} a_{ij}(|\pi_i,\pi_j|)} \prod_{s\ge 1}\!r(d_{\pi}^{\,\underline{s}} , q^d) .$$
\end{thm}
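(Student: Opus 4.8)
The plan is to deduce this statement directly from Theorem \ref{degree 1} by a restriction-of-scalars argument. The point is that a Jordan datum built from a monic irreducible polynomial $f$ of degree $d$ over $\mathbb{F}_q$ is nothing but a degree-one Jordan datum over the larger field $K := \mathbb{F}_q[c(f)]$. Since $f$ is irreducible, $K \cong \mathbb{F}_q[x]/(f) \cong \mathbb{F}_{q^d}$ is a field, and the companion matrix $c(f)$, which satisfies $f(c(f)) = 0$, plays the role of a distinguished root $a_0 \in K$ of $f$. I would first make this identification precise and then transport the computation of $|X_g \cap \textup{Rep}(\alpha,\mathbb{F}_q)_\mathcal{R}|$ to the degree-one computation over $K$ already carried out in Theorem \ref{degree 1}.

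Concretely, grouping the $d|\pi_i|$ coordinates of the space at vertex $i$ into $|\pi_i|$ consecutive blocks of size $d$ turns each block into a one-dimensional $K$-space, so the underlying $\mathbb{F}_q$-space becomes $K^{|\pi_i|}$. Under this regrouping $J_{\pi_i}(f)$, which carries $c(f)$ on its diagonal blocks and $I_d$ on its superdiagonal, becomes exactly the degree-one Jordan matrix $J_{\pi_i}(x-a_0)$ over $K$ with $a_0 = c(f) \in K$. Thus $g$ over $\mathbb{F}_q$ corresponds to the tuple $\tilde g = (J_{\pi_1}(x-a_0),\dots,J_{\pi_n}(x-a_0))$ over $K$ attached to the dimension vector $\tilde\alpha = (|\pi_1|,\dots,|\pi_n|)$, which is precisely the input of Theorem \ref{degree 1} over the field $\mathbb{F}_{q^d}$.

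The crux is to show that this regrouping sets up a bijection $X_g \leftrightarrow X_{\tilde g}$ matching the two relation-respecting conditions. A matrix $\sigma(X_{ij}^{(k)})$ lies in $X_g$ if and only if it intertwines $J_{\pi_i}(f)$ and $J_{\pi_j}(f)$, and I claim every such intertwiner is automatically $K$-linear, that is, commutes with the block-scalar action of $c(f)$. This follows from the Jordan--Chevalley decomposition: over the perfect field $\mathbb{F}_q$ the semisimple part of $J_{\pi_i}(f)$ is its block-diagonal part $\mathrm{diag}(c(f),\dots,c(f))$, and there is a single polynomial $p$, depending only on $f$ and an upper bound for the parts of $\pi_i$ and $\pi_j$, with $p(J_{\pi_i}(f)) = \mathrm{diag}(c(f),\dots,c(f))$ and $p(J_{\pi_j}(f)) = \mathrm{diag}(c(f),\dots,c(f))$. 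Applying $p$ to the intertwining relation $J_{\pi_i}(f)\sigma(X_{ij}^{(k)}) = \sigma(X_{ij}^{(k)})J_{\pi_j}(f)$ then shows $\sigma(X_{ij}^{(k)})$ commutes with the $c(f)$-action and is therefore $K$-linear. Hence $\sigma \mapsto \tilde\sigma$ is a bijection of $X_g$ onto $X_{\tilde g}$; and since an $\mathbb{F}_q$-linear map which is in fact $K$-linear is nilpotent over $\mathbb{F}_q$ if and only if it is nilpotent over $K$, the representation $\sigma$ respects every $R \in \mathcal{R}$ nilpotently if and only if $\tilde\sigma$ does. Therefore $|X_g \cap \textup{Rep}(\alpha,\mathbb{F}_q)_\mathcal{R}| = |X_{\tilde g} \cap \textup{Rep}(\tilde\alpha, \mathbb{F}_{q^d})_\mathcal{R}|$.

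Finally I would invoke Theorem \ref{degree 1} with $\mathbb{F}_q$ replaced by $\mathbb{F}_{q^d}$. The standing polynomiality hypothesis transfers verbatim, since the cardinalities $|\textup{Rep}(\beta, \mathbb{F}_{(q^d)^e})_\mathcal{R}|$ are governed by the same polynomial $r(\beta, \cdot)$ evaluated at powers of $q^d$; in particular the building-block counts over $\mathbb{F}_{q^d}$ are $r(d_\pi^{\,\underline{s}}, q^d)$. Theorem \ref{degree 1} then yields $|X_{\tilde g} \cap \textup{Rep}(\tilde\alpha, \mathbb{F}_{q^d})_\mathcal{R}| = (q^d)^{\sum_{1\le i,j\le n} a_{ij}(|\pi_i,\pi_j|)} \prod_{s\ge 1} r(d_\pi^{\,\underline{s}}, q^d)$, which is exactly the claimed formula because $\tilde\alpha$ and the multiplicity vectors $d_\pi^{\,\underline{s}}$ are unchanged by the regrouping. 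I expect the $K$-linearity of intertwiners to be the main obstacle: it is the single fact that upgrades the $\mathbb{F}_q$-datum to a genuine degree-one $\mathbb{F}_{q^d}$-datum and thereby accounts for every occurrence of $q^d$ in the final formula.
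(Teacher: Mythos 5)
Your proposal is correct and follows essentially the same route as the paper: both reduce to Theorem \ref{degree 1} by restriction of scalars, identifying $\langle c(f)\rangle\cong\mathbb{F}_{q^d}$ and viewing $g$ as a degree-one Jordan datum over that field. The only difference is that where the paper appeals to the general form of the Turnbull--Aitken theorem to see that intertwiners are block matrices with entries in $\langle c(f)\rangle$, you derive the $K$-linearity of intertwiners from the Jordan--Chevalley decomposition, which is a valid and somewhat more self-contained justification of the same key fact.
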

\begin{proof}
Suppose that $d>1$ as the case for $d=1$ has been proved in Theorem \ref{degree 1}. Let $c(f)$ be the companion matrix for $f$ and $\langle c(f) \rangle$ be the subalgebra of $\textup{Mat}(d\times d, \mathbb{F}_q)$ generated by $c(f)$. Since $f$ is the characteristic equation of $c(f)$, $c(f)$ satisfies the polynomial $f$, i.e., $f(c(f))=0$. Since $f$ is irreducible, $f$ is the minimal polynomial satisfied by $c(f)$. This implies that 
$I, c(f), c(f)^2, \cdots, c(f)^{d-1}$ form a basis for $\langle c(f) \rangle$ over $\mathbb{F}_q$, i.e.,
$$\langle c(f) \rangle = \left\{\sum_{i=0}^{d-1}a_ic(f)^i \,|\, a_i\in\mathbb{F}_q, 0\le i \le d-1 \right\}.$$
Thus $\langle c(f) \rangle$ is a commutative subalgebra of $\textup{Mat}(d \times d, \mathbb{F}_q)$ and the following map is an isomorphism:
\begin{align*} 
\mathbb{F}_q[x]/(f(x)) &\to \langle c(f) \rangle \\
x &\mapsto c(f).
\end{align*}
Since $f$ is irreducible, $\mathbb{F}_q[x]/(f(x))$ is isomorphic to the finite field $\mathbb{F}_{q^d}$, and hence $\langle c(f) \rangle$ is a finite field with $q^d$ elements. 

When $\deg(f) > 1$, Theorem \ref{T-A Thm} still holds as long as all submatrices $U_{ij}$ take values from the finite field $\langle c(f) \rangle$. All arguments in the proof of Theorem \ref{degree 1} still work with $\mathbb{F}_q$ being replaced by $\langle c(f) \rangle$. Thus Theorem \ref{degree 1} implies the desired results.
\end{proof}

\section{Counting Formulae}
Let $\varphi_r(q)=(1-q)(1-q^2)\cdots(1-q^r)$ for $r\ge 1$ and $\varphi_0(q)=1$. For $\lambda = (1^{n_1}2^{n_2}3^{n_3}\cdots)\in\mathcal{P}$ in its ``\textit{exponential form}", we define $b_\lambda(q) = \prod_{i\ge1}\varphi_{n_i}(q)$.
Let $\phi_n(q)$ be the number of monic irreducible polynomials of degree $n$ in $\mathbb{F}_q[x]$ with $x$ excluded. It is known that for any positive integer $n$, 
\begin{equation}\phi_n(q) = \frac{1}{n}\sum_{d\,|\,n}\mu(d)(q^{\frac{n}{d}}-1),\end{equation}
where the sum runs over all divisors of $n$ and $\mu$ is the Möbius function.

\begin{dfn}
For $\pi = (\pi_1, \cdots, \pi_n)\in\mathcal{P}^n$, let $X^{|\pi|} = X_1^{|\pi_1|} \cdots X_n^{|\pi_n|}$ and $\mathbb{Q}(q)$ the field of rational functions in $q$ over
the rational field $\mathbb{Q}$. We define a formal power series in $\mathbb{Q}(q)[[X_1,\cdots,X_n]]$ as follows:
$$
P(X_1,\cdots,X_n, q) = \sum_{\pi\in\mathcal{P}^n} \!
\frac{q^{\sum_{1\le i, j \le n}\!a_{ij}(|\pi_i, \pi_j|)}\!\prod_{s\ge 1}\!r(d_{\pi}^{\,\underline{s}} , q)}
{\prod_{1\le i \le n}\!q^{\langle \pi_i, \pi_i\rangle} b_{\pi_i}\!(q^{-1})}X^{|\pi|}.
$$
Note that $((0), \cdots, (0))\in\mathcal{P}^n$ gives rise to a term equal to $1$ in the sum above.
\end{dfn}

\begin{thm} \label{burnside} For $\alpha=(\alpha_1, \cdots, \alpha_n)\in\mathbb{N}^n$, let $X^\alpha = X_1^{\alpha_1}\cdots X_n^{\alpha_n}$. There holds:
$$ 
\sum_{\alpha\in\mathbb{N}^n}^\infty M(\alpha, q)X^\alpha = \prod_{d=1}^\infty\left(
P(X_1^d,\cdots,X_n^d, q^d)
\right) ^{\phi_d(q)}.
$$
\end{thm}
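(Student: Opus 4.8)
The plan is to apply Burnside's orbit-counting lemma to the action of $\textup{GL}(\alpha, \mathbb{F}_q)$ on the finite set $\textup{Rep}(\alpha, \mathbb{F}_q)_\mathcal{R}$, whose orbits are exactly the isomorphism classes counted by $M(\alpha, q)$. Writing $G = \textup{GL}(\alpha, \mathbb{F}_q)$ and $\textup{Fix}(g) = X_g \cap \textup{Rep}(\alpha, \mathbb{F}_q)_\mathcal{R}$, Burnside's lemma gives
\[
M(\alpha, q) = \frac{1}{|G|}\sum_{g\in G}|\textup{Fix}(g)| = \sum_{[g]}\frac{|\textup{Fix}(g)|}{|Z(g)|},
\]
the last sum running over conjugacy classes $[g]$, with $Z(g)$ the centralizer of $g$ in $G$ and $|\textup{Fix}(g)|$ depending only on $[g]$. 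Since $G = \prod_{i=1}^n \textup{GL}(\alpha_i, \mathbb{F}_q)$, a conjugacy class of $g = (g_1,\cdots,g_n)$ is encoded by its rational canonical form: for each monic irreducible $f\in\mathbb{F}_q[x]$ with $f\ne x$ (the exclusion because each $g_i$ is invertible) and each vertex $i$, a partition $\pi_i(f)$, subject to $\sum_f \deg(f)|\pi_i(f)| = \alpha_i$. Collecting these into $\pi(f) = (\pi_1(f),\cdots,\pi_n(f))\in\mathcal{P}^n$, a conjugacy class is precisely an assignment $f\mapsto\pi(f)$ that is trivial for all but finitely many $f$.

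The crucial step is a factorization of $\textup{Fix}(g)$ over the irreducible polynomials $f$. Decomposing each $V_i = \mathbb{F}_q^{\alpha_i}$ into its primary components $V_i^{(f)}$ under $g_i$, the intertwining conditions $g_i\sigma(X_{ij}^{(k)}) = \sigma(X_{ij}^{(k)})g_j$ force every $\sigma(X_{ij}^{(k)})$ to be block diagonal with respect to the decompositions $V_i = \oplus_f V_i^{(f)}$, since an intertwiner between generalized eigenspaces attached to distinct irreducibles must vanish. Consequently, for any cyclic relation $R$ based at a vertex $e$, the endomorphism $\sigma(R)$ of $V_e$ is block diagonal, so it is nilpotent if and only if each block is; the same holds for any $\mathbb{F}_q$-linear combination of simple cyclic relations sharing the starting and ending point $e$. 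Therefore $\sigma\mapsto(\sigma^{(f)})_f$ is a bijection
\[
\textup{Fix}(g) \;\cong\; \prod_f \left(X_{g^{(f)}}\cap\textup{Rep}(\alpha^{(f)},\mathbb{F}_q)_\mathcal{R}\right),
\]
where $g^{(f)} = (J_{\pi_1(f)}(f),\cdots,J_{\pi_n(f)}(f))$ and $\alpha^{(f)} = \deg(f)(|\pi_1(f)|,\cdots,|\pi_n(f)|)$. Theorem \ref{stabilizers} then evaluates each factor, giving $|\textup{Fix}(g)| = \prod_f q^{\deg(f)\sum_{i,j}a_{ij}(|\pi_i(f),\pi_j(f)|)}\prod_{s\ge1}r(d_{\pi(f)}^{\,\underline{s}}, q^{\deg f})$.

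For the denominator, the centralizer factorizes as $|Z(g)| = \prod_{i=1}^n|Z_{\textup{GL}(\alpha_i,\mathbb{F}_q)}(g_i)|$, and the rational-canonical-form description of a single factor further splits over $f$: the commuting algebra of $J_{\pi_i(f)}(f)$ is defined over the field $\langle c(f)\rangle\cong\mathbb{F}_{q^{\deg f}}$, so by Corollary \ref{cor 1} its unit group has order $q^{\deg(f)\langle\pi_i(f),\pi_i(f)\rangle}b_{\pi_i(f)}(q^{-\deg f})$. Hence $|Z(g)| = \prod_f\prod_{i=1}^n q^{\deg(f)\langle\pi_i(f),\pi_i(f)\rangle}b_{\pi_i(f)}(q^{-\deg f})$, which likewise factorizes over $f$. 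Substituting both factorizations into the Burnside sum, and using that the ratio $|\textup{Fix}(g)|/|Z(g)|$ and the monomial $X^{\alpha(g)} = X^{\sum_f \deg(f)|\pi(f)|}$ are each products over $f$ of quantities depending only on $\pi(f)$ and $\deg f$, the sum over conjugacy classes becomes an Euler product over monic irreducibles $f\ne x$:
\[
\sum_{\alpha\in\mathbb{N}^n} M(\alpha,q)X^\alpha = \prod_{f}\left(\sum_{\pi\in\mathcal{P}^n}\frac{q^{\deg(f)\sum_{i,j}a_{ij}(|\pi_i,\pi_j|)}\prod_{s}r(d_\pi^{\,\underline{s}},q^{\deg f})}{\prod_i q^{\deg(f)\langle\pi_i,\pi_i\rangle}b_{\pi_i}(q^{-\deg f})}\,X^{\deg(f)|\pi|}\right).
\]
Each inner factor is recognized, via the substitution $X_i\mapsto X_i^d$ and $q\mapsto q^d$, as exactly $P(X_1^d,\cdots,X_n^d,q^d)$ with $d = \deg f$; grouping the $\phi_d(q)$ irreducibles of each degree $d$ yields the claimed product $\prod_{d\ge1}P(X_1^d,\cdots,X_n^d,q^d)^{\phi_d(q)}$. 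I expect the main obstacle to be the rigorous justification of the $f$-factorization of $\textup{Fix}(g)$ — in particular that the nilpotency of $\sigma(R)$ genuinely decouples across primary components, and that every tuple in the product set reassembles into a genuine fixed representation — together with pinning down the single-block centralizer order so that it matches the denominator of $P$ after replacing $q$ by $q^{\deg f}$.
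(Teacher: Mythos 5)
Your proposal is correct and is precisely the argument the paper intends: it applies the Burnside orbit-counting formula to $\textup{Rep}(\alpha,\mathbb{F}_q)_\mathcal{R}$, parametrizes conjugacy classes by assignments of $n$-tuples of partitions to monic irreducibles $f\ne x$, uses Theorem \ref{stabilizers} for the fixed-point counts, and rearranges into an Euler product — the paper simply cites Theorem 4.3 of Hua \cite{JH 2000} for these steps rather than writing them out. Your version is in fact more explicit than the paper's, notably in justifying the factorization of $\textup{Fix}(g)$ and of the nilpotency condition across primary components; the only cosmetic quibble is that the centralizer order $q^{\deg(f)\langle\pi_i,\pi_i\rangle}b_{\pi_i}(q^{-\deg f})$ needs the standard count of \emph{invertible} elements of the commuting algebra, not just Corollary \ref{cor 1}.
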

\begin{proof}
The method applied in Theorem 4.3 from Hua \cite{JH 2000} still works here.
In current context, the Burnside orbit counting formula is applied to $\text{Rep}(\alpha, \mathbb{F}_q)_\mathcal{R}$ and the number of stabilizers for $X_g$ is given by
Theorem \ref{stabilizers}. Repeating the arguments there yields the desired result.
\end{proof}

\begin{dfn}
For $\alpha\in\mathbb{N}^n\backslash\{0\}$, let $\bar{\alpha} = \gcd(\alpha_1, \cdots, \alpha_n)$. 
Define rational functions $H(\alpha,q)$ for all $\alpha\in\mathbb{N}^n\backslash\{0\}$ as follows:
$$ 
\log\left(P(X_1,\cdots,X_n, q) \right) = \sum_{\alpha\in\mathbb{N}^n\backslash\{0\}}^\infty \!H(\alpha,q)X^\alpha ,
$$
where $\log$ is the formal logarithm, i.e., $\log(1+x) = \sum_{i\ge 1} (-1)^{i-1} x^i/i$.
\end{dfn}

\begin{thm}\label{A poly}The following identity holds for all $\alpha\in\mathbb{N}^n\backslash\{0\}$:
$$
A(\alpha,q) = (q-1)\sum_{d\,|\,\bar{\alpha}}\frac{\mu(d)}{d}H\Big(\frac{\alpha}{d}, q^d\Big),
$$
where the sum runs over all divisors of $\bar{\alpha}$.
\end{thm}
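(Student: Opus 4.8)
The plan is to extract the coefficient of $X^\alpha$ in the formal logarithm of $G(X,q):=\sum_{\alpha\in\mathbb{N}^n}M(\alpha,q)X^\alpha$ in two independent ways and then invert the resulting divisor-sum relation to solve for $A(\alpha,q)$. For the first computation I would apply $\log$ to the identity of Theorem \ref{burnside} and use the defining expansion $\log P(X_1,\dots,X_n,q)=\sum_{\beta\neq 0}H(\beta,q)X^\beta$. Since the substitution $q\mapsto q^d$, $X_i\mapsto X_i^d$ turns $X^\beta$ into $X^{d\beta}$, this gives
$$\log G(X,q)=\sum_{d\ge 1}\phi_d(q)\sum_{\beta\neq 0}H(\beta,q^d)X^{d\beta},$$
and hence, matching the coefficient of $X^\alpha$ (where $d\beta=\alpha$ forces $d\mid\bar{\alpha}$ and $\beta=\alpha/d$),
$$[X^\alpha]\log G(X,q)=\sum_{d\mid\bar{\alpha}}\phi_d(q)\,H(\alpha/d,q^d).\qquad(\mathrm{I})$$

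The second computation is the representation-theoretic core and follows Kac's method as used in Hua \cite{JH 2000}. Because the class of representations respecting $\mathcal{R}$ nilpotently is closed under direct summands (the matrix $\sigma(R)$ of a direct sum is the direct sum of the blocks, hence nilpotent exactly when each block is) and consists of finite-dimensional objects, the Krull--Schmidt theorem applies; counting multisets of indecomposables therefore yields $G(X,q)=\prod_{\beta\neq 0}(1-X^\beta)^{-I(\beta,q)}$, and so $[X^\alpha]\log G=\sum_{j\mid\bar{\alpha}}\tfrac1j I(\alpha/j,q)$. I would then invoke Frobenius (Galois) descent: writing $N_s(\gamma)$ for the number of length-$s$ Frobenius orbits of absolutely indecomposable representations of dimension $\gamma$ respecting $\mathcal{R}$ nilpotently, one has $A(\gamma,q^s)=\sum_{t\mid s}t\,N_t(\gamma)$ and $I(\alpha,q)=\sum_{s\mid\bar{\alpha}}N_s(\alpha/s)$. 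Substituting these and simplifying the nested divisor sums by Möbius inversion (the inner alternating sums collapse via $\sum_{d\mid k}\mu(d)=[\,k=1\,]$) should give the clean identity
$$[X^\alpha]\log G(X,q)=\sum_{m\mid\bar{\alpha}}\tfrac1m A(\alpha/m,q^m).\qquad(\mathrm{II})$$

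Finally I would equate (I) and (II) and invert. It is convenient to record relations of the shape $(a\ast F)(\alpha,q):=\sum_{d\mid\bar{\alpha}}a(d,q)F(\alpha/d,q^d)$; this twisted Dirichlet convolution is associative, with kernel product $(a\ast b)(m,q)=\sum_{d\mid m}a(d,q)b(m/d,q^d)$ and unit $[\,m=1\,]$. With $u(d,q)=1/d$ and $\nu(d,q)=(q-1)\mu(d)/d$, the stated formula $\phi_m(q)=\tfrac1m\sum_{e\mid m}\mu(e)(q^{m/e}-1)$ is \emph{exactly} the kernel identity $\phi=u\ast\nu$. Now (I) and (II) read $\phi\ast H=u\ast A$, so $u\ast A=(u\ast\nu)\ast H=u\ast(\nu\ast H)$; cancelling the $\ast$-invertible kernel $u$ (equivalently, inducting on $\bar{\alpha}$, where the $d=1$ term isolates the top coefficient while all lower terms agree by the induction hypothesis) yields $A=\nu\ast H$, that is
$$A(\alpha,q)=(q-1)\sum_{d\mid\bar{\alpha}}\frac{\mu(d)}{d}\,H(\alpha/d,q^d),$$
as claimed. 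As a sanity check, for primitive $\alpha$ both (I) and (II) collapse to a single term and the identity reduces to $A(\alpha,q)=(q-1)H(\alpha,q)$, consistent with $\phi_1(q)=q-1$.

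The main obstacle is step (II): confirming that, in the presence of the nilpotent relations $\mathcal{R}$, Krull--Schmidt holds and Kac's Frobenius-orbit bookkeeping relating $I$ and $A$ goes through verbatim. The essential compatibilities are that \emph{respecting $\mathcal{R}$ nilpotently} is preserved under extension of scalars and under the Frobenius twist (both immediate, since nilpotency of $\sigma(R)$ is unaffected by base change and by applying Frobenius), so that the absolutely indecomposable representations respecting $\mathcal{R}$ form a Frobenius-stable finite set of orbits to which the descent applies. Everything else is the formal manipulation of (I) and the convolution inversion of the last paragraph.
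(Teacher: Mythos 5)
Your proposal is correct and follows essentially the same route as the paper, whose proof simply defers to Theorem 4.6 of Hua \cite{JH 2000}: take logarithms of the Burnside identity (Theorem \ref{burnside}) and of the Krull--Schmidt product formula, relate $I$ and $A$ by Kac's Frobenius-orbit descent (the paper's identity (\ref{count I})), and invert using $\phi_m(q)=\frac1m\sum_{e\mid m}\mu(e)(q^{m/e}-1)$. Your packaging of the last step as a twisted Dirichlet convolution is a clean way to organize the same Möbius inversion, and your checks that nilpotent respect of $\mathcal{R}$ is stable under direct summands, base change and Frobenius are exactly the compatibilities needed for the cited argument to transfer.
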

\begin{proof}
This is the counterpart of Theorem 4.6 from Hua \cite{JH 2000} with slight adjustment on the definition of $H(\alpha,q)$, same arguments apply.
\end{proof}

Analogues of Theorem 4.6 of Hua \cite{JH 2000} have been proved by Bozec, Schiffmann \& Vasserot \cite{B-S-V 2018} for Lusztig nilpotent varieties and their variants using techniques from Algebraic Geometry. Their definition of
nilpotency is stronger than the one used here. In the language of $\lambda$-ring and Adams operator, Theorem \ref{A poly} is equivalent to the following identities in the ring of formal power series 
$\mathbb{Q}(q)[[X_1,\cdots,X_n]]$:
\begin{align*}
\sum_{\alpha\in\mathbb{N}^n\backslash\{0\}}\!A(\alpha,q)X^\alpha =& \,\,(q-1)\text{Log}\left(P(X_1,\cdots,X_n,q)\right),  \\
P(X_1,\cdots,X_n,q) =& \,\,\text{Exp}\left(\frac{1}{q-1} \sum_{\alpha\in\mathbb{N}^n\backslash\{0\}}\!A(\alpha,q)X^\alpha\right).
\end{align*}
For the definitions of operator $\text{Log}$ and $\text{Exp}$, we refer to the Appendix in Mozgovoy \cite{SM 2007}.

Under the assumption that $r(\alpha, q)$ is a polynomial in $q$ with rational coefficients for all $\alpha\in\mathbb{N}^n$, 
$H(\alpha,q)$'s must be rational functions in $q$, so are $A(\alpha,q)$'s. As $A(\alpha,q)$'s take integer values for all prime powers $q$, $A(\alpha,q)$'s must be polynomials in $q$ with rational coefficients. It follows from Lemma 2.9 of Bozec, Schiffmann \& Vasserot \cite{B-S-V 2018} that $A(\alpha,q)\in\mathbb{Z}[q]$. Kac \cite{VK 1983} implies that the degree of $A(\alpha,q)$ 
is at most $1-\langle\alpha,\alpha\rangle$ where $\langle-,-\rangle$ is the Euler form defined by quiver $\Gamma$.

Theorem \ref{A poly} implies that if $r(\alpha,q)$'s are known for all $\alpha\in\mathbb{N}^n$ then $A(\alpha,q)$'s are known.
$I(\alpha,q)$ and $M(\alpha,q)$ can be calculated by the following identities:
\begin{equation}\label{count I}
I(\alpha,q) = \sum_{d\,|\,\bar{\alpha}}\frac{1}{d}\sum_{r\,|\,d}\mu\Big(\frac{d}{r}\Big)A\Big(\frac{\alpha}{d},q^r\Big),
\end{equation}
\begin{equation}\label{count M}
\sum_{\alpha\in\mathbb{N}^n} M(\alpha,q)X^\alpha = \prod_{\alpha\in\mathbb{N}^n\backslash\{0\}}^\infty(1-X^\alpha)^{-I(\alpha,q)}.
\end{equation}
Identity (\ref{count I}) is the counterpart of the first identity of Theorem 4.1 from Hua \cite{JH 2000} and identity (\ref{count M}) is a consequence of the Krull--Schmidt Theorem from representation theory. It follows that $I(\alpha,q)$ and $M(\alpha,q)$ are polynomials in $q$ with rational coefficients for all $\alpha\in\mathbb{N}^n$.  

\begin{thm}\label{gwki} Let $\Delta^+ = \{\alpha : A(\alpha,q) \ne 0, \alpha\in\mathbb{N}^n\}$ and
$$A(\alpha,q) = \sum_{s= 0}^{1-\langle\alpha,\alpha\rangle}t_{\alpha,s}\,q^s,$$
where $t_{\alpha,s}\in \mathbb{Z}$ and $\langle-,-\rangle$ is the Euler form defined by $\Gamma$.
The following identity holds in $\mathbb{Q}(q)[[X_1,\cdots,X_n]]$:
\begin{align*}
P(X_1,\cdots,X_n, q) 
= \!\prod_{\alpha\in\Delta^+}\!\!\prod_{s= 0}^{1-\langle\alpha,\alpha\rangle}\prod_{i=0}^\infty(1-q^{s+i}X^\alpha)^{t_{\alpha,s}}.
\end{align*}
\end{thm}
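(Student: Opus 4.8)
The plan is to deduce the identity from the plethystic form of $P$ recorded immediately after Theorem \ref{A poly}, namely $P(X_1,\cdots,X_n,q)=\textup{Exp}\big(\tfrac{1}{q-1}\sum_{\alpha\in\mathbb{N}^n\backslash\{0\}}A(\alpha,q)X^\alpha\big)$, and to identify the proposed product as the image under $\textup{Exp}$ of the exponent on the right. I would work in the $\lambda$-ring $\mathbb{Q}(q)[[X_1,\cdots,X_n]]$ whose Adams operations satisfy $\psi_k(q)=q^k$ and $\psi_k(X_i)=X_i^k$, so that $\textup{Exp}(f)=\exp\big(\sum_{k\ge1}\psi_k(f)/k\big)$ on series without constant term; the two facts I would rely on are that $\textup{Exp}$ is multiplicative (it sends sums to products) and that it is inverse to $\textup{Log}$ (see the appendix of Mozgovoy \cite{SM 2007}).

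First I would substitute $A(\alpha,q)=\sum_{s=0}^{1-\langle\alpha,\alpha\rangle}t_{\alpha,s}q^s$ and observe that, since $A(\alpha,q)$ vanishes exactly for $\alpha\notin\Delta^+$, the exponent may be restricted to $\alpha\in\Delta^+$, with the range $0\le s\le 1-\langle\alpha,\alpha\rangle$ furnished by the degree bound of Kac \cite{VK 1983}. By multiplicativity of $\textup{Exp}$ it then suffices to prove, for each fixed $\alpha\in\Delta^+$, the single identity $\textup{Exp}\big(\tfrac{A(\alpha,q)}{q-1}X^\alpha\big)=\prod_{s=0}^{1-\langle\alpha,\alpha\rangle}\prod_{i=0}^\infty(1-q^{s+i}X^\alpha)^{t_{\alpha,s}}$; multiplying these over $\alpha\in\Delta^+$ then delivers the theorem.

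The heart of the matter is a comparison of formal logarithms of the two sides of this single-$\alpha$ identity. On the right I would expand $\log(1-q^{s+i}X^\alpha)=-\sum_{k\ge1}q^{k(s+i)}X^{k\alpha}/k$, interchange the summations over $s$, $i$, $k$, resum the geometric series $\sum_{i\ge0}q^{ki}=1/(1-q^k)$, and use $\sum_s t_{\alpha,s}q^{ks}=A(\alpha,q^k)$ to obtain $\sum_{k\ge1}\tfrac1k\tfrac{A(\alpha,q^k)}{q^k-1}X^{k\alpha}$. On the left, $\log\textup{Exp}\big(\tfrac{A(\alpha,q)}{q-1}X^\alpha\big)=\sum_{k\ge1}\tfrac1k\,\psi_k\big(\tfrac{A(\alpha,q)}{q-1}X^\alpha\big)$, and $\psi_k$ sends this summand precisely to $\tfrac{A(\alpha,q^k)}{q^k-1}X^{k\alpha}$; so the two logarithms coincide term by term, and since the formal logarithm is injective on power series with constant term $1$, the single-$\alpha$ identity follows.

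I expect the only real obstacle to be formal rather than computational: the factors $\prod_{i\ge0}(1-q^{s+i}X^\alpha)^{t_{\alpha,s}}$ do not converge $X$-adically, since the coefficient of $X^\alpha$ is the infinite sum $-t_{\alpha,s}\sum_{i\ge0}q^{s+i}$. The resolution is to read every coefficient as an element of $\mathbb{Q}(q)$ and to interpret the geometric series through the rational-function identity $\sum_{i\ge0}q^{ki}=1/(1-q^k)$; with this convention the coefficient of each $X^{k\alpha}$ in the logarithm is the well-defined rational function $\tfrac1k\tfrac{A(\alpha,q^k)}{q^k-1}$, because for fixed $k$ the $i$-series resums to $1/(1-q^k)$ and the $s$-sum is finite. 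Once this interpretation is fixed the interchange of summations is justified, and the equality of logarithms established above completes the proof.
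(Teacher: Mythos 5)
Your proposal is correct and follows essentially the same route as the paper, which simply invokes the argument of Theorem 4.9 of Hua \cite{JH 2000}: that argument is precisely the comparison of formal logarithms you carry out, starting from $P=\textup{Exp}\bigl(\frac{1}{q-1}\sum_\alpha A(\alpha,q)X^\alpha\bigr)$ (equivalently, from $H(\alpha,q)=\sum_{d\mid\bar\alpha}\frac{1}{d}\frac{A(\alpha/d,q^d)}{q^d-1}$ obtained by M\"obius inversion of Theorem \ref{A poly}), followed by the geometric resummation $\frac{q^{ks}}{q^k-1}=-q^{ks}\sum_{i\ge0}q^{ki}$ and exponentiation. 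Your handling of the convergence of the infinite product (coefficients read in $\mathbb{Q}(q)$ via resummation) is the standard and intended interpretation, so there is no gap.
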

\begin{proof}
This is the counterpart of Theorem 4.9 from Hua \cite{JH 2000}, same arguments apply.
\end{proof}

Kac conjecture now a theorem confirms that Theorem 4.9 of Hua \cite{JH 2000} is a $q$-deformation of Weyl-Kac denominator identity, thus Theorem \ref{gwki} here may also be regarded as a $q$-deformation of Weyl-Kac denominator identity for some generalized Kac-Moody algebra. Defining such algebras would be a very interesting problem.

In view of Lemma 2.9 of Bozec, Schiffmann \& Vasserot \cite{B-S-V 2018}, assuming $r(\alpha,q)\in\mathbb{Q}[q]$ is equivalent to assuming $r(\alpha,q)\in\mathbb{Z}[q]$.
\begin{cjc}\label{cjc}
Under the assumption that $r(\alpha, q)$ exists and $r(\alpha,q)\in\mathbb{Z}[q]$ for all $\alpha\in\mathbb{N}^n$,  all coefficients of polynomial $A(\alpha,q)$ are non-negative integers.
\end{cjc}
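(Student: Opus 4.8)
The plan is to transport into the nilpotent-relation setting the geometric strategy that settled the original Kac positivity conjecture (Crawley-Boevey and Van den Bergh \cite{C-V 2004}, Hausel \cite{TH 2010}, completed in general by Hausel, Letellier and Rodriguez-Villegas), the aim being to realize each integer $t_{\alpha,s}$ as the dimension of a cohomology group, whence non-negativity is automatic. The first step is to give $P$, and therefore $A(\alpha,q)$, a cohomological meaning. Under the standing hypothesis $r(\alpha,q)\in\mathbb{Z}[q]$, the closed $\textup{GL}(\alpha,\mathbb{F}_q)$-invariant subvariety $\textup{Rep}(\alpha,\mathbb{F}_q)_\mathcal{R}\subseteq\textup{Rep}(\alpha,\mathbb{F}_q)$ cut out by the nilpotency of each $\sigma(R)$---a Lusztig-nilpotent-type variety---is polynomial-count, so $r(\alpha,q)$ is its $E$-polynomial; the denominators $q^{\langle\pi_i,\pi_i\rangle}b_{\pi_i}(q^{-1})$ are orders of stabilizers, so $P$ records the class of the quotient stack $[\textup{Rep}(\alpha,\mathbb{F}_q)_\mathcal{R}/\textup{GL}(\alpha,\mathbb{F}_q)]$. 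The identity $A=(q-1)\textup{Log}(P)$ then extracts the absolutely indecomposable part exactly as the plethystic logarithm isolates the free generators of the associated Hall/Lie structure, the factor $q-1$ absorbing the scalar automorphisms.

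The positivity itself must come from a purity statement. I would look for a smooth moduli space of $\mathcal{R}$-respecting representations---built by choosing a generic stability parameter and passing to the stable, framed locus---whose compactly supported cohomology is pure of Tate type, so that, after a monomial normalization, its Poincaré polynomial coincides with $A(\alpha,q)$ and has manifestly non-negative coefficients. Matching the point count of this locus against $A$ would proceed through the $\textup{Log}$--$\textup{Exp}$ formalism already used above, reducing the conjecture to the assertion that the cohomology in question is pure and of Tate type.

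The hard part will be establishing this purity. In the relation-free case purity is inherited from the smoothness of Nakajima quiver varieties, but imposing the relations generically renders the corresponding moduli singular, so smoothness is no longer available. Purity would then have to be produced by other means: an affine paving of $\textup{Rep}(\alpha,\mathbb{F}_q)_\mathcal{R}$ coming from a $\mathbb{G}_m$-action adapted to the nilpotency (forcing even-degree, Tate-type cohomology), a symplectic-resolution argument, or the cohomological Hall algebra and Donaldson--Thomas machinery of Bozec, Schiffmann \& Vasserot \cite{B-S-V 2018}. A further difficulty is that their nilpotency is strictly stronger than the ``respecting nilpotently'' condition used here, so one must first compare the two varieties and their point counts before their positivity results can be applied. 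Bridging that gap, and proving purity for the singular moduli that the weaker condition produces, is the principal obstacle, and is presumably the reason the statement is recorded only as a conjecture.
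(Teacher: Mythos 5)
This statement is recorded in the paper as Conjecture~\ref{cjc} and is given no proof there, so there is no argument of the authors' to measure yours against; the only question is whether your text proves the statement on its own, and it does not. What you have written is a research program, not a proof: every step that would actually force non-negativity of the coefficients $t_{\alpha,s}$ is phrased in the conditional (``I would look for\dots'', ``would then have to be produced\dots''). Concretely, three essential pieces are missing. First, you never construct the moduli space whose (compactly supported, suitably normalized) Poincar\'e polynomial is supposed to equal $A(\alpha,q)$; for quivers with relations the stable framed locus need not be smooth or polynomial-count, and matching its point count against the plethystic-logarithm expression of Theorem~\ref{A poly} is itself a substantial unproved claim rather than a formality. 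Second, the purity of that cohomology --- the only place positivity could come from --- is exactly the assertion you defer: you list three possible mechanisms (affine pavings from a $\mathbb{G}_m$-action, symplectic resolutions, the cohomological Hall algebra machinery of \cite{B-S-V 2018}) without carrying out any of them, and you correctly note that the smoothness argument available in the relation-free case is unavailable here. Third, the nilpotency condition of \cite{B-S-V 2018} is strictly stronger than ``respecting nilpotently,'' so even if their positivity results applied they would not transfer without a comparison of the two varieties and of the resulting generating series, which you flag but do not perform.

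To your credit, the strategy you outline is the right one in spirit: it parallels the route by which Kac's positivity conjecture was settled for quivers without relations in \cite{C-V 2004} and \cite{TH 2010} and its completion by Hausel, Letellier and Rodriguez-Villegas, and your $\textup{Log}$/$\textup{Exp}$ bookkeeping is consistent with how Theorem~\ref{A poly} and the surrounding discussion are set up. But identifying the correct strategy and honestly listing its obstacles is not the same as overcoming them. As submitted, the proposal establishes nothing beyond what the paper already records --- that $A(\alpha,q)\in\mathbb{Z}[q]$ via Lemma 2.9 of \cite{B-S-V 2018}, and that positivity would follow from a purity statement that remains unproved. The statement stays a conjecture.
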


\section{Special Cases}

\textbf{Case 1.} Let $\mathcal{R}$ be an empty set. Every representation of $\Gamma$ respects $\mathcal{R}$ nilpotently, thus  
$\text{Rep}(\alpha, \mathbb{F}_q)_\mathcal{R} =\text{Rep}(\alpha, \mathbb{F}_q)$. Since $r(\alpha, q) = q^{\sum_{1\le i,j \le n}a_{ij}\alpha_i\alpha_j}$ for $\alpha\in\mathbb{N}^n$, $r(d_{\pi}^{\,\underline{s}} , q) =q^{ \sum_{1\le i,j \le n} a_{ij}m_{\pi_i}^{\underline{s}}m_{\pi_j}^{\underline{s}}}$ for $\pi\in\mathcal{P}^n$  and $s\in\mathbb{N}\backslash\{0\}$. Thus,
\begin{align*}
P(X_1,\cdots,X_n, q) &= \sum_{\pi\in\mathcal{P}^n} \!
\frac{q^{\sum_{1\le i, j \le n}\!a_{ij}(|\pi_i, \pi_j|)} \prod_{s\ge 1}\!\!\left(q^{\sum_{1\le i, j \le n} a_{ij}m_{\pi_i}^{\underline{s}}m_{\pi_j}^{\underline{s}} }\right) }
{\prod_{1\le i \le n}\!q^{\langle \pi_i, \pi_i\rangle} b_{\pi_i}\!(q^{-1})}X^{|\pi|} \\
&= \sum_{\pi\in\mathcal{P}^n} \! \frac{q^{\sum_{1\le i, j \le n}\!\left(a_{ij}(|\pi_i, \pi_j|) +\sum_{s\ge 1}a_{ij}m_{\pi_i}^{\underline{s}}m_{\pi_j}^{\underline{s}}\right)}}
{\prod_{1\le i \le n}\!q^{\langle \pi_i, \pi_i\rangle} b_{\pi_i}\!(q^{-1})}X^{|\pi|} \\
&=\sum_{\pi\in\mathcal{P}^n} \!
\frac{q^{\sum_{1\le i, j \le n}\!a_{ij}\langle\pi_i, \pi_j\rangle}}
{\prod_{1\le i \le n}\!q^{\langle \pi_i, \pi_i\rangle} b_{\pi_i}\!(q^{-1})}X^{|\pi|}.
\end{align*}
Thus Theorem \ref{burnside} is equivalent to Theorem 4.9 of Hua \cite{JH 2000}.

\textbf{Case 2.} Let $\Gamma$ be the quiver with one vertex and $g$ edge-loops, i.e., the quiver defined by the matrix $[g]$ where $g\ge 1$, and
$\mathcal{R} = \{X_{11}^{(i)} : 1 \le i \le g\}$. The isomorphism classes of representations of $\Gamma$ over $\mathbb{F}_q$ that respect $\mathcal{R}$ nilpotently are in one-to-one
correspondence with the orbits of $g$-tuples of nilpotent matrices over $\mathbb{F}_q$ under simultaneous conjugation. Since the number of $n\times n$ nilpotent matrices over $\mathbb{F}_q$ is
$q^{n^2-n}$ according to Fine \& Herstein \cite{F-H 1958}, $r(n, q) = q^{g(n^2-n)}$ for $n\in\mathbb{N}$. Thus,
\begin{align*}
P(X, q) &= \sum_{\pi\in\mathcal{P}} \!
\frac{q^{g(|\pi, \pi|)} \prod_{s\ge 1}\!q^{g\left((m_{\pi}^{\underline{s}})^2 - m_{\pi}^{\underline{s}}\right) }}
{q^{\langle \pi, \pi\rangle} b_{\pi}(q^{-1})}X^{|\pi|} \\
&= \sum_{\pi\in\mathcal{P}} \!
\frac{q^{g(|\pi, \pi|) + \sum_{s\ge 1} g(m_{\pi}^{\underline{s}})^2 - \sum_{s\ge 1} g\left(m_{\pi}^{\underline{s}}\right) }}
{q^{\langle \pi, \pi\rangle} b_{\pi}(q^{-1})}X^{|\pi|} \\
&= \sum_{\pi\in\mathcal{P}} \!
\frac{q^{g(\langle\pi, \pi\rangle -l(\pi))}}
{q^{\langle \pi, \pi\rangle} b_{\pi}(q^{-1})}X^{|\pi|},
\end{align*}
where $\l(\pi) = \sum_{s\ge 1}m_{\pi}^{\underline{s}}$ is the length of $\pi$. Thus Theorem \ref{burnside} is equivalent to Theorem 4.1 of Hua \cite{JH 2021}.

\textbf{Case 3.} Let $\Gamma$ be the quiver defined by the following matrix, where $g$ is an integer greater than 0:
$$
\left[
\begin{array}{cc}
0 & 1  \\ 
g & 0 
\end{array}
\right],
$$
and $\mathcal{R} = \{ X_{12}^{(1)}X_{21}^{(i)} : 1 \le i \le g \}$ a set of cyclic relations for $\Gamma$.

Let $[n]_q = \prod_{i=0}^{n-1}(q^n - q^i)$ for $n\ge 1$ and $[0]_q=1$. Thus $|\text{GL}(n,\mathbb{F}_q)| = [n]_q$.
Given a dimension vector  $(m,n)\in\mathbb{N}^2$ and a non-negative integer $r\le \min(m,n)$, let $D_{(m,n,r)}$ be the $m\times n$ matrix in the following form:
$$ \left[
\begin{array}{cc}
I & 0  \\ 
0 & 0 
\end{array}
\right], $$
where $I$ is the identity matrix of order $r$.

Let $\mathcal{C}_{(m,n,r)}$ be the centralizer of $D_{(m,n,r)}$ in $\text{GL}((m,n),\mathbb{F}_q)$, i.e.,
$$
\mathcal{C}_{(m,n,r)} = \left\{(M, N)\in\text{GL}((m,n),\mathbb{F}_q) : M^{-1} D_{(m,n,r)} N = D_{(m,n,r)} \right \},
$$
and hence the number of $m\times n$ matrices over $\mathbb{F}_q$ which have rank $r$ is equal to ${|\text{GL}((m,n),\mathbb{F}_q)|}/{|\mathcal{C}_{(m,n,r)}|}$.
For any $(M, N)\in\text{GL}((m,n),\mathbb{F}_q)$, $M$ and $N$ can be written as block matrices as follows:
$$M = \left[
\begin{array}{ll}
A_{r\times r} & B_{r\times (m-r)}  \\ 
C_{(m-r) \times r} & D_{(m-r)\times (m-r)}
\end{array}
\right], 
N = \left[
\begin{array}{ll}
E_{r\times r} & F_{r\times (n-r)}  \\ 
G_{(n-r) \times r} & H_{(n-r)\times (n-r)}
\end{array}
\right],$$
where the orders of the submatrices are indicated by their subscripts. Since
$$M^{-1} D_{(m,n,r)} N = D_{(m,n,r)} \text{ if and only if $A=E$, $C=0$ and $F=0$}, $$
it follows that
\begin{align*}
\left|\mathcal{C}_{(m,n,r)} \right| 
= \, & [r]_q [m-r]_q q^{r(m-r)} [n-r]_q q^{(n-r)r} \\
= \, & [r]_q [m-r]_q [n-r]_q q^{r(m+n) - 2r^2}.
\end{align*}

Let
$$
\mathcal{N}_{(m,n,r)} = \left\{N\in\textup{Mat}(n\times m, \mathbb{F}_q) : D_{(m,n,r)} N \textit{ is nilpotent } \right \}.
$$
Any $n\times m$ matrix $N$ can be written as a block matrix as follows:
$$ \left[
\begin{array}{ll}
A_{r\times r} & B_{r\times (m-r)}  \\ 
C_{(n-r) \times r} & D_{(n-r)\times (m-r)}
\end{array}
\right],
$$
where the orders of the submatrices are indicated by their subscripts. $D_{(m,n,r)} N$ is nilpotent if and only if $A$ is nilpotent, therefore
$$
|\mathcal{N}_{(m,n,r)} | = q^{r^2-r}q^{mn-r^2} = q^{mn-r}.
$$

Let 
$$\mathcal{E}_{(m,n,r)} = \{\sigma\in\text{Rep}((m,n), \mathbb{F}_q)_{\mathcal{R}}:\sigma(X_{12}^{(1)}) \textit{ has rank } r \}.$$
Since the number of $m\times n$ matrices over $\mathbb{F}_q$ that have rank $r$ is equal to ${|\text{GL}((m,n),\mathbb{F}_q)| }/{|\mathcal{C}_{(m,n,r)}|}$,
\begin{align*}
|\mathcal{E}_{(m,n,r)}| &= \frac{|\text{GL}((m,n),\mathbb{F}_q)| }{|\mathcal{C}_{(m,n,r)}|} |\mathcal{N}_{(m,n,r)} |^g
 = \frac{[m]_q [n]_q q^{g(mn-r)}}{[r]_q [m-r]_q [n-r]_q q^{r(m+n) -2r^2}}.
\end{align*}

Since $\text{Rep}((m,n), \mathbb{F}_q)_{\mathcal{R}}$ is a disjoint union of $\mathcal{E}_{(m,n,r)}$ where $0\le r \le \min(m,n)$,
\begin{equation}\label{gen kronecker}
r((m,n), q) = \sum_{r=0}^{\min(m,n)} \frac{[m]_q [n]_q q^{g(mn-r)}}{[r]_q [m-r]_q [n-r]_q q^{r(m+n) -2r^2}}.
\end{equation}

It follows that $r((m,n), q)$ is a polynomial  in $q$ with integral coefficients and hence $A((m,n),q)$ can be calculated by Theorem \ref{A poly}.

When $g=1$, $\Gamma$ is the quiver below known as affine Dynkin quiver $\tilde{A}_1$:
\[
\begin{tikzcd}[row sep=large, column sep = large]
\underset{1}{\circ}\arrow[r, bend left, "{X_{12}^{(1)}}"] & 
\arrow[l, bend left, "{X_{21}^{(1)}}" ]
\arrow[l, bend left, "{X_{21}^{(1)}}" ] \underset{2}{\circ} 
\end{tikzcd}.
\]
Thus we have
$$
P(X_1,X_2, q) = \sum_{\pi\in\mathcal{P}^2} 
\frac{q^{2(|\pi_1, \pi_2|)}\prod_{s\ge 1}\!r(d_{\pi}^{\,\underline{s}} , q)}
{\prod_{1\le i \le 2}q^{\langle \pi_i, \pi_i\rangle} b_{\pi_i}\!(q^{-1})}\,X^{|\pi|},
$$
where $r(d_{\pi}^{\,\underline{s}} , q)$ is given by identity (\ref{gen kronecker}) with $g=1$. 

For any $(m,n)\in\mathbb{N}^2\backslash\{(0,0)\}$,
according to Donovan \& Freislich \cite{D-F 1973} and Dlab \& Ringel \cite{D-R 1976}, $A((m,n),q)$ has the following form:
\begin{align*}
    A((m,n),q) =
    \begin{cases}
      2  & \textit{ if } |m-n| = 0, \\
      1  & \textit{ if } |m-n| = 1, \\
	0  & \textit{ if } |m-n| > 1.
    \end{cases}
\end{align*}
Thus Theorem \ref{gwki} amounts to the following identity:
$$
P(X_1,X_2, q) = \prod_{n=1}^\infty\prod_{i=0}^\infty(1-q^iX_1^nX_2^{n-1})(1-q^iX_1^{n-1}X_2^n)(1-q^iX_1^nX_2^n)^2.
$$

In all cases above, $r(\alpha, q)$'s are known polynomials in $q$ with integral coefficients, thus $A(\alpha, q)$'s are computable by Theorem \ref{A poly}. All sample results given in Hua \cite{JH 2000}\cite{JH 2021}
are consistent with the conjecture above.

\vspace{0.2cm}
\textbf{\large{Acknowledgments}}
\vspace{0.1cm}

The authors would like to thank Xueqing Chen for his constructive comments and suggestions on the draft of this paper.

\vspace{0.2cm}
Department of Mathematical Sciences, Tsinghua University, Beijing 100084, China.\\
\textit{Email address}: \texttt{bmdeng@math.tsinghua.edu.cn} \\ \\
Mathematics Enthusiast \\
\textit{Email address}: \texttt{jiuzhao.hua@gmail.com}
\end{document}